\renewcommand{\cal}{\mathcal}
\newcommand{\ra}{\rightarrow}
\newcommand{\Exp}[1]{E\left[#1\right]} 
\newcommand{\ceil}[1]{\left\lceil #1 \right\rceil}
\newcommand{\N}{\mathbb{N}}
\newcommand{\Z}{\mathbb{Z}}
\newcommand{\C}{\mathcal{C}}
\newcommand{\K}{\mathcal{K}}
\DeclareMathOperator*{\argmax}{arg\,max}
\newcommand{\hide}[1]{}
\newtheorem{theorem}{Theorem}
\newtheorem{assumption}[theorem]{Assumption}
\newtheorem{corollary}[theorem]{Corollary}
\newtheorem{remark}[theorem]{Remark}
\newtheorem{lemma}{Lemma}
\def\bb0{{\mathbb{0}}}
\def\bb{{\mathbf{b}}}
\def\b0{{\mathbf{0}}}
\def\opt{\mathsf{OPT}}
\def\bV{{\mathbf{V}}}
\def\b1{{\mathbf{1}}}
\def\bbE{{\mathbb{E}}}
\def\bbP{{\mathbb{P}}}
\def\cA{\mathcal{A}}
\def\cI{\mathcal{I}}
\def\cM{\mathcal{M}}
\def\sfi{{\mathsf{i}}}
\def\sf0{{\mathsf{0}}}
\def\nn{\nonumber}
\newcommand{\conftext}[1]{\ifthenelse{\boolean{wiopt}}{{#1 }}{}}
\newcommand{\TRtext}[1]{\ifthenelse{\boolean{wiopt}}{}{#1}}
\begin{document}

\title{Non-asymptotic near optimal algorithms for two sided matchings}

\author{\IEEEauthorblockN{Rahul Vaze}
\IEEEauthorblockA{\textit{School of Technology and Computer Science} \\
\textit{Tata Institute of Fundamental Research}}
\and
\IEEEauthorblockN{Jayakrishnan Nair}
\IEEEauthorblockA{\textit{Department of Electrical Engineering} \\
\textit{IIT Bombay}}
}

\maketitle

\begin{abstract}
  A two-sided matching system is considered, where servers are assumed
  to arrive at a fixed rate, while the arrival rate of customers is
  modulated via a price-control mechanism. We analyse a loss model,
  wherein customers who are not served immediately upon arrival get
  blocked, as well as a queueing model, wherein customers wait in a
  queue until they receive service. The objective is to maximize the
  platform profit generated from matching servers and customers,
  subject to quality of service constraints, such as the expected wait
  time of servers in the loss system model, and the stability of the
  customer queue in the queuing model. For the loss system, subject to
  a certain relaxation, we show that the optimal policy has a
  \emph{bang-bang} structure.
  We also derive approximation guarantees for simple pricing policies.
  For the queueing system, we propose a simple bi-modal matching
  strategy and show that it achieves near optimal profit.
\end{abstract}

\section{Introduction}

Two-sided queues, where customers and servers both arrive to a
platform/aggregator and then wait to be matched, have been made fairly
popular by ride hailing applications like Uber and Lyft that match
passengers with drivers, meal delivery couriers like Grubhub and
DoorDash that match diners with delivery couriers, and crowdsourcing
platforms like Amazon MTurk, where tasks are matched to
workers/volunteers.  The objective of the platform is to maximize
profit, while improving the market efficiency.

To earn revenue, the platform sets a two sided or one sided price.
With two sided pricing, both the customers and the servers are
advertised a (possibly different) price, and customers willing to pay
the quoted price and servers who are willing to serve for the quoted
price enter the system. The platform profit is then the difference
between the two prices.  In a one sided price model, only the
customers are quoted a price, and customers who are willing to pay
enter the system. The server arrival rate is assumed to be fixed, and
insensitive to price. In this model, the platform keeps a fraction of
the revenue (made from customers) to itself and distributes the rest
uniformly across all servers.  In many practical systems, server
payoffs are better modelled as long-term rewards
\cite{banerjee2015pricing}, and thus considering one sided pricing is
reasonable.  In both models, the price is dynamically adjusted to
maximize profit, and to efficiently match supply and demand.

With two-sided queues, we consider two well studied models: the loss
model, and the queueing model.  For both models, we assume one sided
pricing, where the platform advertises a state dependent price that
determines the rate of arrival of customers, while the rate of arrival
of servers is fixed and insensitive to the platform price.

Under the loss model, servers arrive into a queue and wait, while
arriving customers are instantaneously matched to the head of the line
server, if any. A customer arriving when no servers are present is
lost. The objective we consider with the loss model is to maximize a
linear combination of the platform's profit and the expected delay
experienced by servers, and the goal is to find the optimal dynamic
price to maximize this objective function.

With the queuing model, both the servers and the customers wait in
their respective queues, and are matched appropriately. In this case,
we consider the objective of maximizing the platform's profit subject
to the stability of the customer queue. The goal is to design a
dynamic pricing and matching strategy to achieve optimal platform
profit while ensuring the stability of the customer queue.


\noindent {\bf Prior Work:} With a single sided queue (where server is
fixed and only customers arrive), \cite{paschalidis2000congestion}
considered a loss-system with finite resource capacity. A customer
arriving when all resources are occupied is lost, and the problem is
to decide the price of admission given the state of the system at each
time to maximize the expected payoff. Similar pricing models have been
considered for a queuing system, where customers wait in a queue, and
server's service distribution is exponential with a tuneable
parameter. Optimal admission control, with and without pricing, and
service parameter selection, so as to maximize a linear combination of
expected payoff and expected waiting time has been considered in
\cite{low1974optimal, stidham1989monotonic, george2001dynamic,
  ata2006dynamic, adusumilli2010dynamic, kim2018value}, assuming
Poisson arrivals. In particular, \cite{ata2006dynamic} exactly
characterizes the optimal policy, though not in closed-form, while
\cite{kim2018value} derives an asymptotically optimal dynamic pricing
policy.

Two sided queues have been considered in \cite{banerjee2015pricing,
  nguyen2018queueing, mahavir2020dynamic, sood2018pricing,
  kanoria2019near, yan2020dynamic, besbes2020surge, hu2020surge},
where both customers and servers arrive over time, and wait to be
matched. In particular, most of these papers are motivated from
ride-hailing applications such as Uber, Lyft, or meal delivery
services Grubhub, DoorDash, etc.  Some of the results
\cite{yan2020dynamic, banerjee2015pricing,
  besbes2020surge,hu2020surge} in this area focus on the importance of
dynamic pricing over static pricing. Under a suitable fluid scaled
limit of this system, \cite{banerjee2015pricing} showed that static
pricing is sufficient to optimize the objective function, and dynamic
pricing only helps in improving the robustness of the system.

An extension of this two-sided queue model with multiple types of
servers and customers has also been considered in
\cite{caldentey2009fcfs, adan2012exact, gurvich2015dynamic,
  hu2020dynamic, mahavir2020dynamic}, where an additional bipartite
matching decision has to be made. In \cite{caldentey2009fcfs},
limiting results of matching rates between certain customer and server
types with FCFS scheduling have been analyzed.  Without pricing, the
optimal matching to minimize the queueing cost has been analyzed in
\cite{gurvich2015dynamic} under a suitably scaled large system
limit. With the objective of minimizing the discounted reward obtained
by matching customers and servers over a finite horizon, while
accounting for the waiting costs, \cite{hu2020dynamic} analyzed fixed
pricing strategies, while dynamic pricing strategies were considered
in \cite{chen2020pricing, mahavir2020dynamic}. Most of these analyses
are presented using a large system scaling regime.

To summarize, in prior work, either exactly optimal but non-closed
form policies have been characterized, or structural properties of the
optimal policy have been established, or performance analysis of
simple strategies has been performed. Importantly, to the best of our
knowledge, any analysis of an explicit policy has been via an
asymptotic (large system) scaling.


\noindent {\bf Our Contributions:} Compared to prior work, the focus
of this work is to consider simple (one-sided) pricing strategies for
two-sided queues without considering any large system limits, e.g.,
fluid limits or asymptotics, with near-optimal
performance. Specifically, we model server arrivals as an exogenous
random process (independent of the system state and the
matching/pricing policy). On the other hand, customer arrivals are
modulated by the price~$p$ posted by the platform. Formally, we model
the customer arrival rate $\mu(p)$ as a non-increasing concave
function of the platform price $p$. We consider two variants of our
model: a loss model where customers not served upon arrival are
blocked, and a queueing model, where customers wait in a FCFS queue
until they are matched with a server. Our contributions for these two
models are as follows.

\subsubsection{Loss System}

For the loss system, we consider that the customer and server arrival
processes are Poisson, and the objective is to maximize a linear
combination of the platform's profit and the expected delay
experienced by servers.

When function $\mu(p)$ is linear in $p$ (which is an important case
\cite{lobo2003pricing} in pricing/profit models), we consider a
suitable relaxation of the objective, for which we show that the
optimal pricing strategy has a simple bang-bang style threshold
structure. This means that the price is set to be the maximum feasible
value when the number of servers is less than a threshold, and to the
minimum feasible value when the number of servers exceeds the same
threshold. This bang-bang structure of the optimal policy implies that
computing the optimal policy boils down to a simple one-dimensional
search. Moreover, the same structure can also be exploited to speed up
online learning of the optimal policy via reinforcement learning
(see~\cite{Roy2019}). While the relaxation we consider is quite
similar to models studied in the literature, that a linear price
sensitivity function admits such a simple optimal policy has not been
observed before, to the best of our knowledge.
%
Additionally, we show that a simple static pricing model can be
near-optimal under certain conditions. \TRtext{Specifically, we derive
universal upper bounds on the objective value attainable by any
policy, and show that static pricing can achieve a performance that is
`close' to the upper bounds in many cases.}

\subsubsection{Queueing System}

Compared to the loss system, with the queueing system, we consider
general customer and server arrival distributions, and a discrete time
setup. The objective function is to maximize the platform's profit
subject to the customer queue being stable. Towards that end, we
propose an algorithm that charges a static price, and uses a bi-modal
matching strategy with a single threshold $U$, to decide the number of
customer-server pairs that should be matched at each time slot. The
main idea of the algorithm is to choose a static price thereby making
the customer arrival rate constant over time.  The threshold $U$ is
chosen in such way that the rate of customer departures (number of
customer-server matched pairs) in each time slot is close to the
revenue-optimal departure rate. Small perturbations in the departure
rate, required to maintain the customer queue at a steady level,
result in near-optimal performance.

In particular, we show that the proposed strategy has an additive
sub-optimality gap of $O\left(\frac{\log U}{U}\right)$. Moreover, the
expected delay experienced by customers with the proposed strategy is
$O(U)$. Thus, even though we do not explicitly include the expected
delay experienced by the customers in the objective function and only
enforce the constraint that the customer queue should be stable, a
by-product of the proposed algorithm and its analysis is that the
expected delay experienced by the customers is also controlled by the
threshold~$U.$ Thus, given additional QoS constraints on the expected
delay experienced by the customers, we can choose the threshold $U$ to
achieve a tradeoff between the sub-optimality gap and the expected
delay.

\conftext{Due to space constraints, several proofs have been omitted;
  these can be found in~\cite{FullVersion}.}

\section{Loss model}
\label{sec:loss}

In this section, we consider the case where customers do not wait,
i.e., customers that arrive when there are no servers in the system
get lost/blocked.

\subsection{Model and Preliminaries}

We model the loss system in continuous time. Servers arrive to the
platform as per a Poisson process with rate $\lambda.$ Upon arrival,
servers wait in an infinite buffer queue, until they are matched to a
customer. The customer arrival rate $\mu = g(p)$ is a function of the
platform's posted price~$p,$ where $g$ is a strictly decreasing and
concave function of $p.$ Specifically, when the platform posts
price~$p,$ the time until the next customer arrival is assumed to be
exponentially distributed with mean $1/g(p).$ Thus, the function~$g$
captures the price sensitivity of the customer base.

We assume that the price $p$ is constrained to lie in
$[p_{\min},p_{\max}],$ where $0 < p_{\min} < p_{\max},$ and that the
price defaults to $p_{\max}$ when there are no servers
available. Customers that arrive when there are no servers are
blocked/lost, i.e., customers do not queue.\footnote{The case where
  customers also can get queued will be considered in
  Section~\ref{sec:queueing}.}

Under the above model, given the memorylessness in server and customer
interarrival times, the state of the system is captured by the number
of servers in the queue. Denoting the platform's price when there
are~$i$ servers in the queue by $p_i,$ the state evolves as per the
(controlled) birth-death Markov chain depicted in Figure~\ref{fig:BD}.
Here, $\mu_i := g(p_i)$ and
$\mu_i \in [g(p_{\max}),g(p_{\min})].$

\begin{figure}[h]
\centering
\begin{tikzpicture}[level/.style={sibling distance=50mm/#1}]
\node[circle,draw] (s) at (0,0) {$0$};
\node[circle,draw] (a) at (2,0) {$1$};
\node[circle,draw] (b) at (4,0) {$2$};
\node[circle,draw] (c) at (6,0) {$3$};

\draw[bend right, dashed,<-]  (s) to node [below] {$\mu_1$} (a);
\draw[bend right, dashed,<-]  (a) to node [above] {$\lambda$} (s);

\draw[bend right, dashed,<-]  (a) to node [below] {$\mu_2$} (b);
\draw[bend right, dashed,<-]  (b) to node [above] {$\lambda$} (a);

\draw[bend right, dashed,<-]  (b) to node [below] {$\mu_3$} (c);
\draw[bend right, dashed,<-]  (c) to node [above] {$\lambda$} (b);

\draw[dotted,thick]  (7,0) to node [above] {} (8,0);


\end{tikzpicture}
\caption{Birth death chain for server evolution}
\label{fig:BD}
\end{figure}

The goal of the platform is to set prices $(p_i,\ i \in \N)$ so as to
maximize
$\tilde{C} = \lambda \Exp{P} - \tilde{w} \Exp{N},$
where $P$ denotes the stationary price seen by a (matched) customer,
$N$ denotes the stationary number of servers in the system, and
$\tilde{w}$ is a positive weight. The impicit constraint here is of
course that the server queue is stable, i.e., the Markov chain
describing the temporal evolution of the number of waiting servers is
positive recurrent. The first term in~$\tilde{C}$ is the revenue rate
for the platform, the second may be interpreted as the (holding) cost
associated with servers idling. By Little's law, maximizing
$\tilde{C}$ is equivalent to maximizing
$$C = \frac{\tilde{C}}{\lambda} = \Exp{P} - \tilde{w} \Exp{T} =
\Exp{P} - w \Exp{N}$$ where $T$ denotes the stationary server sojourn
time, $w = \frac{\tilde{w}}{\lambda}.$

The optimal policy can be computed numerically using standard
machinery from the theory of Markov decision process (see, for
example, \cite{Puterman2014}). Related problem formulations are also
analysed in \cite{paschalidis2000congestion,ata2006dynamic}; these
references characterize structural properties of the optimal
policy. However, our goal here is to consider a relaxed version of the
above objective, which admits a more explicit analysis.

We now describe our relaxed objective. For a given policy
$(p_i,\ i \in \N),$ let the stationary distribution associated with
the server occupancy be denoted by $\pi = (\pi_i,\ i \in \N).$ Our
objective~$C$ may be expressed in terms of $\pi$ as follows:
\begin{align*}
  C &= \underbrace{\sum_{i=1}^{\infty} \pi_{i-1} p_i}_{\Exp{P}} - w \underbrace{\sum_{i=1}^{\infty} i \pi_i}_{\Exp{N}}.
\end{align*}
It is important to note that in the expression for $\Exp{P},$ we have
$\pi_{i-1}$ multiplying $p_i$ because the long run fraction of
(server) departures out of state~$i$ (i.e., departures that
\emph{leave behind}~$i-1$ servers in the system) equals the long run
fraction of server arrivals that \emph{see} $i-1$ servers in the
system, which, by PASTA,
equals~$\pi_{i-1}.$ Our relaxed objective is now stated as follows.
\begin{align}
  C_{\text{rel}} &=  \sum_{i=1}^{\infty} \pi_{i-1} p_{i-1} - w \sum_{i=1}^{\infty} i \pi_i = \sum_{i=0}^{\infty} \pi_{i} p_{i} - w \sum_{i=1}^{\infty} i \pi_i
  \label{eq:relaxed_obj}
\end{align}
Note that under this relaxation, the term
$\sum_{i=1}^{\infty} \pi_{i-1} p_i$ in the objective is replaced by
the more tractable term $\sum_{i=1}^{\infty} \pi_{i-1} p_{i-1}.$ The
motivation for this relaxation is of course to align the distribution
used to average price with the stationary distribution.\footnote{This
  `mis-alignment' arises in~$C$ because we control the \emph{left}
  transition rates in this model. In an alternative model wherein the
  control is on the \emph{rightward} transition rates (this would
  arise if customers were to queue, and servers do not; think of an
  airport taxi lot), the `mis-alignment' would not occur, and we would
  not need to relax the objective in this manner.} Clearly,
$C_{\text{rel}}$ and $C$ would be close when the price varies slowly
with state. Specifically, for static pricing policies, the two
objectives are identical. However, even for the bang-bang type
policies we consider later, we find that two objectives are aligned,
as we demonstrate as part of our numerical experiments in
Section~\ref{sec:loss_numerics}.

Most of our results will be derived for the case where the price
sensitivity function~$g$ is linear:
\begin{assumption}
  \label{ass:linear}
  $g(p)$ depends linearly on~$p,$ i.e., $g(p) = \beta - \alpha p,$
  where $\beta,\alpha > 0.$ Moreover,
  $\mu_{\max}:= \beta - \alpha p_{\min}> \lambda,$ and
  $\mu_{\min} := \beta - \alpha p_{\max} > 0.$
\end{assumption}
We also consider a more general class of concave price sensitivity
functions, and prove bounds on the sub-optimality of simple static
pricing policies for this class.
\begin{assumption}
  \label{ass:concave}
  $g(p) = (\beta - \alpha p)^{\theta},$ where $\theta \in (0,1],$ and
  $\beta,\alpha > 0.$ Moreover,
  $\mu_{\max}:= (\beta - \alpha p_{\min})^{\theta}> \lambda,$ and
  $\mu_{\min} := (\beta - \alpha p_{\max})^{\theta} > 0.$
\end{assumption}
Under both assumptions, note that $p_{\max} < \frac{\beta}{\alpha}.$
Thus, $\frac{\beta}{\alpha}$ is a trivial upper bound on the objective
value achievable. More refined bounds will be derived in
Section~\ref{sec:static}.

\hide{
The remainder of this section is organised as follows. We first show
that the optimal policy under linear price sensitivity and our relaxed
objective is of bang-bang type. This is established as a consequence
of a bang-bang lemma for Markov chains, which is discussed next. We
then consider static pricing, and show that under certain conditions,
simple static pricing has a bounded sub-optimality. Finally, we
conclude with some numerical results for the loss model.
}

\subsection{Linear price sensitivity \& relaxed objective: Optimal policy}
\label{sec:bang_bang_optimal}

Throughout this section, we make Assumption~\ref{ass:linear}, and
consider the relaxed objective~$C_{\text{rel}}$ (see
\eqref{eq:relaxed_obj}). Recall that the number of servers evolves as
per the birth death Markov chain shown in Figure~\ref{fig:BD}. Here,
$p_i,$ or equivalently $\mu_i = g(p_i),$ can be interpreted as the
`action' taken in state~$i.$ Recall also that $p_0 = p_{\max};$ let
$\mu_0 := g(p_0) = \mu_{\min}.$

We begin by rewriting $C_{\text{rel}}$ as follows.
\begin{lemma}
  \label{lemma:relaxed_obj}
  Under Assumption~\ref{ass:linear}, 
  $C_{\text{rel}} = \frac{1}{\alpha} \left(\beta - \pi_0 \mu_0 -\lambda \right)
  - w \sum_{i=1}^{\infty} i \pi_i.$
\end{lemma}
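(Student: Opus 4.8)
The plan is to use linearity to write each price~$p_i$ in terms of the action $\mu_i$, substitute into the price term of $C_{\text{rel}}$, and then collapse the resulting sum using the cut (detailed-balance) equations of the birth-death chain.

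First I would note that under Assumption~\ref{ass:linear}, $\mu_i = g(p_i) = \beta - \alpha p_i$, so $p_i = (\beta - \mu_i)/\alpha$ for every $i \ge 0$ (recalling that $p_0 = p_{\max}$ and $\mu_0 = \mu_{\min}$). Substituting this into the price term of $C_{\text{rel}}$ (see \eqref{eq:relaxed_obj}) and using $\sum_{i\ge 0}\pi_i = 1$ gives
$$\sum_{i=0}^{\infty}\pi_i p_i = \frac{1}{\alpha}\sum_{i=0}^{\infty}\pi_i(\beta - \mu_i) = \frac{1}{\alpha}\Bigl(\beta - \sum_{i=0}^{\infty}\pi_i\mu_i\Bigr),$$
where the interchange of $\tfrac{1}{\alpha}$ and the sum is justified by nonnegativity of all terms. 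Next I would invoke the balance equations of the chain in Figure~\ref{fig:BD}: cutting between states $i-1$ and $i$ yields $\lambda\pi_{i-1} = \mu_i\pi_i$ for all $i \ge 1$. Summing over $i \ge 1$ and using $\sum_{i\ge 1}\pi_{i-1} = \sum_{j\ge 0}\pi_j = 1$ gives $\sum_{i=1}^{\infty}\mu_i\pi_i = \lambda$, hence $\sum_{i=0}^{\infty}\pi_i\mu_i = \pi_0\mu_0 + \lambda$. Plugging back in, $\sum_{i=0}^{\infty}\pi_i p_i = \frac{1}{\alpha}(\beta - \pi_0\mu_0 - \lambda)$, and combining with the unchanged holding-cost term $-w\sum_{i\ge 1} i\pi_i$ in $C_{\text{rel}}$ yields the claimed identity.

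There is no serious obstacle here; the only points requiring care are that the identity $\sum_{i\ge 1}\mu_i\pi_i = \lambda$ is exactly the steady-state statement that the matched-customer (server departure) rate equals the server arrival rate, so that positive recurrence of the server queue is implicitly being used to guarantee that $\pi$ is a well-defined probability distribution and all sums converge; and that the state-$0$ term must be tracked correctly, since $p_0 = p_{\max}$ does appear in $C_{\text{rel}}$ after the relaxation but is not governed by any balance equation (which is precisely why the residual $-\pi_0\mu_0/\alpha$ survives).
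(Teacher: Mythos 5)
Your proof is correct and follows essentially the same route as the paper's: substitute $p_i = (\beta-\mu_i)/\alpha$ using linearity, then use the detailed-balance (cut) equations $\lambda\pi_{i-1} = \mu_i\pi_i$ of the birth--death chain to collapse $\sum_{i\ge 1}\pi_i\mu_i$ to $\lambda$. The paper phrases that step as "reversibility," but it is the same identity, and your explicit tracking of the state-$0$ term and the implicit positive-recurrence requirement matches what the paper leaves tacit.
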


Lemma~\ref{lemma:relaxed_obj} states that under linear price
sensivity, the first term in the relaxed objective decreases linearly
in $\pi_0.$ In other words, maximizing the first term boils down to
minimizing $\pi_0.$ We now exploit this property to show that the
optimal policy is of bang-bang type.

\begin{theorem}
  \label{thm:bangbangpricing}
  Under Assumption~\ref{ass:linear} (wherein the price sensitivity
  function~$g$ is linear), there exists a policy that optimizes
  $C_{\text{rel}}$ of the form
  $$p_i = \left\{
  \begin{array}{cl}
    p_{\max}& \text{ for } i < \ell^*\\
    p^* & \text{ for } i = \ell^*\\
    p_{\min} & \text{ for } i > \ell^*
  \end{array}\right.,$$
where $\ell^* \in \N \cup \{\infty\},$ $p^* \in [p_{\min},p_{\max}].$
\end{theorem}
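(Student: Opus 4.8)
The plan is to reduce the problem to a structural property of the controlled birth-death chain and then invoke a "bang-bang" argument on the resulting optimization. By Lemma~\ref{lemma:relaxed_obj}, maximizing $C_{\text{rel}}$ is equivalent to minimizing the quantity
\[
\Phi(\pi,\mu) = \frac{1}{\alpha}\,\pi_0 \mu_0 + w \sum_{i=1}^{\infty} i\,\pi_i,
\]
over all feasible action sequences $(\mu_i,\ i\ge 1)$ with $\mu_i \in [\mu_{\min},\mu_{\max}]$ (recall $\mu_0 = \mu_{\min}$ is fixed, and $p_i$ and $\mu_i$ are in bijection via $g$). Note that in a birth-death chain the stationary probabilities are $\pi_i = \pi_0 \prod_{j=1}^{i}(\lambda/\mu_j)$, normalized; so both $\pi_0$ and the tail sum $\sum_i i\pi_i$ are explicit functions of the $\mu_j$'s. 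The key observation is that \emph{increasing any single $\mu_k$} pushes mass toward lower states: it increases $\pi_0$ (bad for the first term, which wants $\pi_0$ small) but decreases $\sum_i i\pi_i$ (good for the second term). So there is a genuine trade-off governed by a single scalar per state, which is exactly the situation in which bang-bang optimality is expected.

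**Main steps.**

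First I would show that an optimal policy exists within the class of "monotone threshold" policies — i.e., there is a state $\ell^*$ such that $p_i = p_{\max}$ (equivalently $\mu_i = \mu_{\min}$) for $i < \ell^*$ and $p_i = p_{\min}$ (equivalently $\mu_i = \mu_{\max}$) for $i > \ell^*$, with at most one intermediate value $p^* = p_{\ell^*}$. The cleanest route is an exchange/interchange argument: fix all actions except $\mu_k$ and $\mu_{k+1}$ and express $\Phi$ as a function of these two; show that the objective, along the feasible box, is minimized at an extreme configuration that respects the ordering "$\mu_{\min}$ below, $\mu_{\max}$ above". Concretely, one writes the balance equations to get $\pi$ in product form, substitutes into $\Phi$, and checks that $\Phi$ restricted to the segment where we trade $\mu_k$ up against keeping the chain normalized is \emph{quasiconcave} (hence minimized at the endpoints) — this is where the linearity of $g$, which makes the first term of $C_{\text{rel}}$ affine in $\pi_0$ via Lemma~\ref{lemma:relaxed_obj}, is essential. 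Iterating this local interchange sorts the actions into the claimed threshold shape, leaving possibly a single state $\ell^*$ with an interior action. The degenerate cases $\ell^* = 0$ (always $p_{\min}$ — but then we must check stability, $\lambda < \mu_{\max}$, which holds by Assumption~\ref{ass:linear}) and $\ell^* = \infty$ (always $p_{\max}$) are handled separately; allowing $\ell^*\in\N\cup\{\infty\}$ accommodates them.

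**The main obstacle, and how to handle it.**

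The hard part will be making the interchange argument rigorous in the presence of the infinite state space and the \emph{implicit stability constraint}: some action sequences make the chain transient (if $\mu_i$ stays near $\mu_{\min}$ for large $i$ and $\mu_{\min}$ could be $\le\lambda$ — though Assumption~\ref{ass:linear} only guarantees $\mu_{\max}>\lambda$, not $\mu_{\min}>\lambda$), in which case $\pi$ is not even defined. One way around this: first argue that for the supremum of $C_{\text{rel}}$ it suffices to consider policies that are eventually equal to $p_{\min}$ (since $\mu_{\max} > \lambda$ ensures positive recurrence and finite $\Exp{N}$ for any such policy, and pushing the tail toward $p_{\min}$ only helps the first term while the contribution of high states to $\Exp{N}$ is controlled), thereby restricting to a compact-enough family; then run the finite interchange argument on the (arbitrarily long but finite) "non-trivial prefix". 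A second, more robust route is to pass through the average-cost Markov decision process formulation: the action space $[\mu_{\min},\mu_{\max}]$ is compact, the per-stage cost is continuous, and one invokes the standard result that (under a suitable Lyapunov/geometric-ergodicity condition implied by $\mu_{\max}>\lambda$) there is an optimal stationary deterministic policy; then the bang-bang conclusion follows from the fact that, holding the relative value function fixed, the single-state optimization of the action is an optimization of an affine-plus-ratio function of $\mu_i$ over an interval — affine in the numerator direction after using Lemma~\ref{lemma:relaxed_obj} — so the minimizer is an endpoint except on a measure-zero set of "ties", which corresponds to the single indifference state $\ell^*$. I expect the paper to take the first (elementary exchange) route given its "non-asymptotic, explicit" theme, so I would lead with that and relegate the MDP existence argument to a remark.
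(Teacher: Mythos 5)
Your proposal correctly identifies the reformulation in Lemma~\ref{lemma:relaxed_obj} (that under a linear $g$ the revenue term in $C_{\text{rel}}$ is affine in $\pi_0$) and correctly anticipates a perturbation/interchange argument as the engine. However, you miss the one clean reduction that makes the paper's proof of this theorem essentially trivial: rather than running an interchange directly on the two-term objective $\Phi = \frac{1}{\alpha}\pi_0\mu_0 + w\sum_i i\pi_i$, the paper observes that any unconstrained optimizer $\pi^*$ is in particular an optimizer of the \emph{constrained} problem $\min \pi_0$ subject to $\sum_i i\pi_i \le \C$, with $\C$ set to the optimal value $\sum_i i\pi_i^*$. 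So it suffices to prove bang-bang structure for this constrained problem, uniformly in $\C$. That is exactly Lemma~\ref{lemma:bangbang}, and Theorem~\ref{thm:bangbangpricing} then follows in three lines.

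The distinction matters for execution. Your sketch of the interchange says to fix $\mu_k,\mu_{k+1}$, look at the 2D feasible box, and argue quasiconcavity of $\Phi$; but $\Phi$ need not be quasiconcave on that box, and even if it were, vertex optimality alone does not deliver the required \emph{ordering} of which vertex ($\mu_{\min}$ below, $\mu_{\max}$ above). What actually works — and what the paper's Lemma~\ref{lemma:bangbang} does — is to restrict the perturbation of $(\rho_k,\rho_{k+1})$ to the one-dimensional curve on which the moment $\sum_i (i-\C) h_i$ is held at zero, and show that moving along that curve toward $(\bar\rho,\underline\rho)$ strictly increases $1/\pi_0$. You gesture at this with the phrase "trade $\mu_k$ up against keeping the chain normalized," but the stationary distribution is \emph{always} normalized; the invariant you need to hold is the moment constraint, and making that explicit is precisely the Lagrangian-style reduction. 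Once you adopt the constrained formulation, the perturbation bookkeeping decouples cleanly into Lemma~\ref{lemma:monotoninity_rho} (monotonicity of $\pi_0$ and $\sum_i i\pi_i$ in each $\rho_k$) plus the sign analysis in the proof of Lemma~\ref{lemma:bangbang}. The positive-recurrence worry you raise is also handled there implicitly: a transient policy admits no stationary distribution and hence is excluded from the constrained feasible set from the outset, without any "eventually $p_{\min}$" truncation argument.

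Your alternative ACOE/MDP route is a legitimate second path, and you are right that linearity of $g^{-1}$ makes the single-state optimization affine in $\mu_i$, pushing the optimizer to an endpoint. But note the paper deliberately avoids this machinery and even remarks that its bang-bang result is strictly stronger than the monotonicity-type structure that standard structured-MDP arguments yield — so this route is a departure, not a reconstruction, of the paper's argument. In summary: right instinct (perturbation), missing ingredient (parameterize by the moment constraint $\C$ and minimize $\pi_0$ alone), and a route-two sketch that is valid in spirit but not the one the paper actually uses.
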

The bang-bang policy stated in Theorem~\ref{thm:bangbangpricing} has
the following interpretation: When there are fewer than $\ell^*$
servers in the system, the platform sets the maximum price, thereby
limiting customer arrivals as far as possible so as to provide the
maximum upward drift to the server queue.  On the other hand, when the
number of servers exceeds $\ell^*,$ the platform sets the minimum
price, thereby maximizing the rate of customer arrivals and providing
the maximum downward drift to the server queue. Finally, when there
are exactly $\ell^*$ servers, the platform sets an intermediate
price~$p^*.$ Thus, the platform in effect seeks to maintain the number
of servers at around~$\ell^*$ to the extent that its control over
customer arrivals permits. As will be apparent from the proof of
Theorem~\ref{thm:bangbangpricing}, the aforementioned policy
effectively minimizes the probability that the server queue becomes
empty, subject to an upper bound on the expected stationary number of
servers in the system.

\begin{remark}
  It is important to note that the bang-bang structure of the optimal
  policy as stated in Theorem~\ref{thm:bangbangpricing} is
  significantly stronger than the monotonicity properties that are
  typically established in structured MDPs (see, for
  example,~\cite{serfozo1976}). Moreover, note that
  Theorem~\ref{thm:bangbangpricing} is proved using the representation
  of the relaxed objective in Lemma~\ref{lemma:relaxed_obj}, which in
  turn relies heavily on the linearily of the price sensitivity
  function~$g.$
\end{remark}

\begin{remark}
  While the optimal policy as stated in
  Theorem~\ref{thm:bangbangpricing} is parameterized by
  $(\ell^*,p^*),$ it can also be parameterized via a single
  parameter~$x^* \in \mathbb{R}_+.$ The two parameterizations are
  related as follows: $\ell^* = \ceil{x^*},$ and $p^* = p_{\max} -
  (\ceil{x^*} - x^*)(p_{\max}-p_{\min}).$ This one-dimensional
  parameterization simplifies the task of computing the optimal policy
  as a function of the system parameters, and is amenable to efficient
  online learning via standard stochastic approximation techniques
  (see~\cite{Roy2019}).
\end{remark}

\begin{proof}[Proof of Theorem~\ref{thm:bangbangpricing}]
  It suffices to show that an optimal policy of the specified form
  exists for any \emph{constrained} optimization of the following
  form, parameterized by $\C > 0.$
  \begin{align*}
    \max\ \frac{1}{\alpha} \left(\beta - \pi_0 \mu_0 -\lambda \right) \qquad 
    s.t.\ \sum_{i=1}^{\infty} i \pi_i \leq \C
  \end{align*}
  Equivalently, it suffices to show that an optimal policy of the
  specified form exists for any \emph{constrained} optimization of the
  following form:
  \begin{equation}
    \min\ \pi_0  \qquad
    s.t.\ \sum_{i=1}^{\infty} i \pi_i \leq \C
          \label{opt:Markov}
  \end{equation} 
  That this problem, which is well motivated in its own right, has an
  optimal solution of the bang-bang form specified, is proved as
  Lemma~\ref{lemma:bangbang}.
\end{proof}

The optimization~\eqref{opt:Markov} is a natural and well-motivated
problem in the context of controlled birth death Markov chains
over~$\Z_+$, where the goal is to minimize the stationary probability
of state~0, subject to an upper bound on the expectation of the steady
state distribution. Since this problem is interesting in its own
right, we study it (independently of its application to our dynamic
pricing model) in the following section.

\subsection{A bang-bang lemma for controlled Markov chains}

Consider a birth death Markov chain over state space $\Z_+.$ The
transition rate from state $i$ to $i+1$ is denoted by $\lambda_i$ for
$i \geq 0$ and the transition rate from state $i$ to $i-1$ is denoted
by $\mu_i$ for $i \geq 1.$ For $i \geq 1$
$\rho_i := \frac{\lambda_{i-1}}{\mu_i}.$

Assuming the chain is positive recurrent, the stationary distribution
is given by $\pi_i = \frac{h_i}{\sum_{j=0}^{\infty} h_j},$ where
$h_0 = 1,$ $h_i = \prod_{j=1}^i \rho_j$ for $i \geq 1.$

With $\rho_i$ constrained to lie in $[\underline{\rho},\bar{\rho}],$
where $0 < \underline{\rho} < \bar{\rho},$ our goal is to minimize
$\pi_0$ subject to a moment condition on the stationary
distribution. Formally, this is posed as:
\begin{equation}
  \begin{array}{rl}
    \min.& \pi_0 \\
    s.t.& \sum_{i=0}^{\infty} i \pi_i \leq C \\
        & \rho_i \in [\underline{\rho},\bar{\rho}] \quad (i \geq 1)
  \end{array}
  \label{eq:bang_bang_obj}
\end{equation}
An implicit constraint here is of course that the chain is positive
recurrent. Clearly, $\underline{\rho} < 1$ is necessary and
sufficient for the feasibility of positive recurrence of the
chain. Note also that if $\underline{\rho} < 1$ and
$\frac{\underline{\rho}}{1 - \underline{\rho}} > C,$ then the
optimization~\eqref{eq:bang_bang_obj} is infeasible (since the
expected steady state value of the chain is easily seen to be at least
$\frac{\underline{\rho}}{1 - \underline{\rho}}$). Thus, the
optimization~\eqref{eq:bang_bang_obj} is well posed if
$\underline{\rho} < 1$ and
$\frac{\underline{\rho}}{1 - \underline{\rho}} \leq C.$ Under these
conditions, the following lemma shows that the optimal control is of
bang-bang type.
\begin{lemma}
  \label{lemma:bangbang}
  Assuming $\underline{\rho} < 1$ and
  $\frac{\underline{\rho}}{1 - \underline{\rho}} \leq C,$ the optimal
  solution of \eqref{eq:bang_bang_obj} is of the form
  $$\rho_i = \left\{
  \begin{array}{cl}
    \bar\rho & \text{ for } i < \ell^*\\
    r^* & \text{ for } i = \ell^*\\
    \underline{\rho} & \text{ for } i > \ell^*
  \end{array}\right.,$$
where $\ell^* \in \N \cup \{\infty\},$ $r^* \in [\underline{\rho},\bar{\rho}].$
\end{lemma}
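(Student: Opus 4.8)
The plan is to pass to the un-normalised weights $h_i$ and exploit linearity. Since $\pi_0 = \big(\sum_{j\ge 0} h_j\big)^{-1}$, minimising $\pi_0$ is the same as maximising $\Phi(h):=\sum_{j\ge 0}h_j$, and the moment constraint $\sum_i i\pi_i\le C$ becomes $L(h):=\sum_i (C-i)h_i\ge 0$; note that $L(h)\ge 0$ already forces $\sum_i h_i<\infty$ (hence positive recurrence), so no separate recurrence condition is needed. The admissible weight sequences form $\mathcal H:=\{h:\ h_0=1,\ \underline\rho\,h_{i-1}\le h_i\le\bar\rho\,h_{i-1}\ \forall i\ge 1\}$, and $\Phi$, $L$, and the defining constraints of $\mathcal H$ are \emph{all linear in $h$}, so $\mathcal F:=\mathcal H\cap\{L\ge 0\}$ is convex; it is nonempty by the standing assumptions $\underline\rho<1$, $\tfrac{\underline\rho}{1-\underline\rho}\le C$. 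I would first check that $\mathcal F$ is compact (Tychonoff gives coordinatewise compactness of $\mathcal H$; $L$ is upper semicontinuous; and on $\mathcal F$ one has, for $N>C$, the \emph{uniform} tail bound $\sum_{i>N}h_i\le \tfrac{1}{N-C}\sum_{i<C}(C-i)\bar\rho^{i}$, which makes $\Phi$ continuous on $\mathcal F$ and attained). A linear functional on a compact convex set attains its maximum on a face, and such a face contains an extreme point of $\mathcal F$; fix one such optimal extreme point $h^\star$ and set $\rho^\star_i:=h^\star_i/h^\star_{i-1}$.

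It then suffices to establish two properties of $h^\star$. \textbf{(A) At most one $\rho^\star_i$ lies strictly inside $(\underline\rho,\bar\rho)$.} This is where extremality is used: if $\rho^\star_k$ and $\rho^\star_m$ ($k<m$) were both strictly interior, one builds a nonzero direction $\Delta$ with $\Delta_0=0$, with $\Delta_i=\rho^\star_i\Delta_{i-1}$ at every index where $\rho^\star_i$ is at a boundary (which pins $\Delta$ down to two free scalars $\Delta_k,\Delta_m$ and keeps the boundary ratios exactly fixed), and with $L(\Delta)=0$ (one linear equation in two unknowns); then $h^\star\pm t\Delta\in\mathcal F$ for small $t$, contradicting extremality. \textbf{(B) The sequence $(\rho^\star_i)$ is non-increasing.} Suppose not, so there is an adjacent inversion $\rho^\star_a<\rho^\star_{a+1}$. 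Swapping these two ratios changes only $h_a$, raising it to $h^\star_a\rho^\star_{a+1}/\rho^\star_a$; this strictly increases $\Phi$ and changes $L$ by $(C-a)(\tilde h_a-h^\star_a)$. If $a\le C$, this keeps $L\ge 0$, contradicting optimality. If $a>C$, a pure swap may violate the constraint, so instead I would raise $\rho^\star_a$ by a small $\varepsilon$ and lower $\rho^\star_{a+1}$ by $\delta$; a direct computation shows that the values of $\delta$ for which $L$ does not decrease form a nonempty window, precisely because $\sum_{i>a}(i-a)h^\star_i>0$, and along that window $\Phi$ strictly increases — again contradicting optimality. Properties (A) and (B) together force $\rho^\star$ to equal $\bar\rho$ on an initial segment $\{1,\dots,\ell^\star-1\}$, then a single value $r^\star\in[\underline\rho,\bar\rho]$ at $\ell^\star$, then $\underline\rho$ thereafter (with $\ell^\star=\infty$, i.e.\ $\rho^\star\equiv\bar\rho$, an admissible case) — which is exactly the claimed form.

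I expect the main obstacle to be the analytic bookkeeping that makes the extreme-point argument rigorous over a countable state space: compactness of $\mathcal F$ and continuity of $\Phi$ via the uniform tail estimate, and the verification in (A) that extremality really restricts admissible directions as claimed (one must note that only the currently-binding ratios constrain $\Delta$, and that $\Delta$ inherits summability from $h^\star$). The exchange step (B) is the conceptual heart but is elementary once the $h$-formulation is in place; the only mildly delicate point is the $a>C$ case, where the naive swap is infeasible and one must trade off against a second coordinate as above.

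As a sanity-check route that avoids infinite-dimensional convexity, one can instead truncate the chain at level $n$: the feasible region is then a genuine polytope in $\mathbb R^{n+1}$ (the ratio bounds are linear inequalities), whose vertices automatically have at most one interior ratio, so the same exchange argument proves the bang-bang form for each $n$; one then lets $n\to\infty$, using the one-parameter ($x$)-description of bang-bang policies to extract a convergent subsequence whose limit is a bang-bang optimiser of \eqref{eq:bang_bang_obj}.
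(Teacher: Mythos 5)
Your proposal is correct, and it follows a genuinely different route from the paper. The paper argues directly at the level of a candidate optimizer: after disposing of the trivial case and observing (via a monotonicity lemma) that the moment constraint must bind, it proves a single ``Claim~1'': any feasible $\rho$ with $m(\rho)=0$ that has a gap — some $i$ with $\rho_i<\bar\rho$ and $\rho_{i+1}>\underline\rho$ — admits a strict improvement by increasing $\rho_i$ and decreasing $\rho_{i+1}$ with magnitudes calibrated to keep $m=0$; the sign computation reduces to showing $(i+1-C)+\sum_{j\geq i+2}(j-C)\prod_{k=i+2}^j\rho_k>0$, which is trivial when $i+1\geq C$ and uses $m(\rho)=0$ when $i+1<C$. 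You instead erect a convex-analysis scaffold (product-topology compactness of $\mathcal{F}$ via the uniform tail bound, upper semicontinuity of $L$, Krein--Milman to obtain an optimal extreme point) and then split the work into (A) a binding-constraints count that caps the number of interior ratios at one, and (B) an exchange argument that rules out inversions. Two things your route buys: it supplies the existence of an optimizer, which the paper's Claim~1 tacitly presumes; and your clean swap for $a\leq C$ entirely avoids the paper's delicate subcase $i+1<C$, leaving only the benign $a>C$ perturbation (and the ``direct computation'' you defer there does check out: choosing $\delta$ to hold $L$ fixed yields $\Delta\Phi = h_a u\cdot\frac{\sum_{j>a}(j-a)h_j}{\sum_{j>a}(j-C)h_j}>0$, confirming that $\sum_{j>a}(j-a)h_j>0$ is precisely the driver). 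What the paper's approach buys in return is economy: a single perturbation lemma with no infinite-dimensional topology. The truncation route you mention at the end is a sensible third alternative, though one still has to justify passing the bang-bang structure and feasibility to the $n\to\infty$ limit.
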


This lemma is proved by showing that any feasible policy that does not
have the above bang bang structure can be improved upon via a
perturbation towards this structure (details can be found
in~\cite{FullVersion}). The proof arguments are similar to those
in~\cite{KiranKavithaNair}, where a bang-bang style policy is shown to
be optimal in a different context.

\hide{After deriving structural results on the optimal policy for the
  customer arrival rate that is a linear function of the price in this
  section, next, we consider simple pricing strategies, and bound
  their competitive ratios. Recall that competitive ratio is defined
  as the ratio of the profit ($C_{\text{rel}}$) of a strategy and the
  profit of the optimal strategy, and a strategy is defined to be
  $c$-competitive if the ratio of the profit of the strategy and that
  of the optimal strategy is at least $1/c$.}

\subsection{Single (static) price policies}
\label{sec:static}

Next, we consider the simplest pricing policy: a constant,
state-independent price, under the relaxed
objective~$C_{\text{rel}}.$\footnote{Note that under static pricing,
  $C$ and $C_{\text{rel}}$ are equal. However, we since one of our
  universal upper bounds (Lemma~\ref{lemma:ub1}) is only proved for
  the relaxed objective, we persist with the use of $C_{\text{rel}}$
  throughout this section.}  While static pricing might seem naive, we
show that under certain conditions, if the static price is chosen
carefully, the suboptimality relative to the (unknown) optimal policy
can be bounded. This is somewhat analogous to what happens with the
classical server speed scaling problem, where it is known that a
suitably chosen static speed choice is constant competitive under a
stochastic workload model \cite{Wierman2012,Vaze2020}.

We begin by deriving some universal upper bounds on the objective
value under \emph{any} policy (not necessarily static).

\subsubsection{Universal upper bounds}
\label{sec:universal_bounds_loss}

\begin{lemma}
  \label{lemma:ub1}
  Under Assumption~\ref{ass:concave}, for any policy, we have
  $C_{\text{rel}} \leq g^{-1}(\lambda).$
\end{lemma}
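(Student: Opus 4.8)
The plan is to control $C_{\text{rel}}$ in two stages: first discard the nonnegative holding-cost contribution, reducing the claim to a bound on the stationary average price, and then bound that average price using concavity of $g^{-1}$ together with a rate-conservation identity for the server birth--death chain. Since $w>0$ and $\sum_{i\ge 1} i\pi_i \ge 0$, we have $C_{\text{rel}} = \sum_{i=0}^{\infty}\pi_i p_i - w\sum_{i=1}^{\infty} i\pi_i \le \sum_{i=0}^{\infty}\pi_i p_i$, so it suffices to show $\sum_{i=0}^{\infty}\pi_i p_i \le g^{-1}(\lambda)$, where $\pi$ is the stationary distribution induced by the (positive recurrent) policy.

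For the second stage, note that under Assumption~\ref{ass:concave} we have $g^{-1}(\mu) = \frac{1}{\alpha}\bigl(\beta - \mu^{1/\theta}\bigr)$, which, as a function on $\mathbb{R}_+$, is strictly decreasing and (because $1/\theta \ge 1$) concave. Writing $p_i = g^{-1}(\mu_i)$ and applying Jensen's inequality for the concave function $g^{-1}$ against the probability vector $\pi$ gives $\sum_i \pi_i p_i = \sum_i \pi_i\, g^{-1}(\mu_i) \le g^{-1}\!\bigl(\sum_i \pi_i \mu_i\bigr)$. It remains to lower-bound the average matching rate $\sum_i \pi_i \mu_i$. Detailed balance for the chain gives $\pi_i \lambda = \pi_{i+1}\mu_{i+1}$ for all $i\ge 0$; summing over $i$ yields the flow-balance identity $\sum_{i\ge 1}\pi_i \mu_i = \lambda$. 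Consequently $\sum_{i\ge 0}\pi_i \mu_i = \pi_0\mu_0 + \lambda \ge \lambda$, since $\mu_0 = \mu_{\min} > 0$. Because $g^{-1}$ is decreasing, $g^{-1}\!\bigl(\sum_i \pi_i\mu_i\bigr) \le g^{-1}(\lambda)$, and chaining the three displays gives $C_{\text{rel}} \le g^{-1}(\lambda)$.

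The one step requiring care is the rate-conservation identity $\sum_{i\ge 1}\pi_i\mu_i = \lambda$, and relatedly the observation that including the state-$0$ term $\pi_0\mu_0$ only helps: it enlarges the argument of the decreasing function $g^{-1}$, tightening rather than weakening the bound. Everything else is Jensen plus monotonicity of $g^{-1}$. One harmless edge case: if $\lambda < \mu_{\min}$ then $g^{-1}(\lambda) > p_{\max} \ge \sum_i \pi_i p_i$ trivially, so the substance of the lemma lies in the regime $\mu_{\min}\le \lambda < \mu_{\max}$, where $g^{-1}(\lambda)$ is an interior value of $[p_{\min},p_{\max}]$.
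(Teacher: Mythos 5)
Your proof is correct and takes essentially the same route as the paper's: drop the nonnegative holding-cost term, apply Jensen's inequality to the concave function $g^{-1}$, and then use the flow-balance identity $\sum_{i\ge 1}\pi_i\mu_i = \lambda$ together with monotonicity of $g^{-1}$. You simply spell out the detailed-balance derivation and the concavity check more explicitly than the paper does.
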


\begin{lemma}[Light traffic bound]
  \label{lemma:ub2}
  Under Assumption~\ref{ass:concave}, for any policy,
  $C_{\text{rel}} \leq \max_{p \in [p_{\min},p_{\max}]} \left(p - \frac{\tilde{w}}{g(p)}\right).$
\end{lemma}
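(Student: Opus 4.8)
The plan is to rewrite $C_{\text{rel}}$ using the flow-balance (cut) equations of the birth--death chain in Figure~\ref{fig:BD}, and then reduce the claim to a termwise comparison against the defining maximum together with one conservation identity.

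Write $M := \max_{p\in[p_{\min},p_{\max}]}\big(p - \tfrac{\tilde w}{g(p)}\big)$; the only structural property of $g$ that the argument uses is that $g$ is finite and strictly positive on $[p_{\min},p_{\max}]$ (so $M$ is well defined), which holds under Assumption~\ref{ass:concave}. Equating the long-run up- and down-crossing rates of the edge $(i-1,i)$ gives the balance equations $\lambda\pi_{i-1} = g(p_i)\pi_i$ for all $i\geq 1$; summing these over $i\geq 1$ yields the conservation law
\[
\sum_{i\geq 1}\pi_i\, g(p_i) \;=\; \lambda\sum_{i\geq 1}\pi_{i-1} \;=\; \lambda ,
\]
which simply says the match rate equals the server arrival rate. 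Substituting $\pi_{i-1} = g(p_i)\pi_i/\lambda$ into $C_{\text{rel}} = \sum_{i\geq 1}\pi_{i-1}p_{i-1} - w\sum_{i\geq 1} i\pi_i$ and recalling $\tilde w = \lambda w$ gives
\[
C_{\text{rel}} \;=\; \frac{1}{\lambda}\sum_{i\geq 1}\pi_i\big(g(p_i)\,p_{i-1} - \tilde w\, i\big).
\]

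The remaining estimate is purely termwise. Since $i\geq 1$ in every term, $\tilde w\, i\geq \tilde w$; factoring out $g(p_i)>0$, and using $\pi_i g(p_i)\geq 0$ together with $p_i - \tilde w/g(p_i)\leq M$ (valid because $p_i\in[p_{\min},p_{\max}]$), we get
\[
C_{\text{rel}} \;\leq\; \frac{1}{\lambda}\sum_{i\geq 1}\pi_i\, g(p_i)\Big(p_i - \frac{\tilde w}{g(p_i)}\Big)\;\leq\; \frac{M}{\lambda}\sum_{i\geq 1}\pi_i\, g(p_i)\;=\;M ,
\]
the last equality being the conservation law. Positive recurrence of the server queue --- the implicit standing hypothesis --- is what makes $\sum_i\pi_i = 1$ and legitimizes the rearrangements; if $\sum_i i\pi_i = \infty$ then $C_{\text{rel}} = -\infty$ and there is nothing to prove.

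The step I expect to need the most care is the passage to the first displayed inequality: the balance equation pairs the down-rate $g(p_i)$ out of state $i$ with the price $p_{i-1}$ that the system displays \emph{after} the match, whereas the clean factorization $g(p_i)\big(p_i - \tilde w/g(p_i)\big)$ wants the price $p_i$ seen \emph{at} the match. When the price is static the two coincide for every $i$ (and then $C_{\text{rel}} = C$), so the computation is exact; more generally, running the identical argument on the true objective $C = \sum_{i\geq 1}\pi_{i-1}p_i - w\sum_{i\geq 1} i\pi_i$ places $p_i$ next to $g(p_i)$, so the computation is exact there and gives the same bound $M$.
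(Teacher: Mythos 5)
Your proof takes a genuinely different route from the paper's, and unfortunately it has a gap that you yourself flag but do not close. The paper's proof is a probabilistic (tagged-server) argument: by PASTA, $\sum_i\pi_i p_i$ is the expected price an arriving server sees, and the paper argues that a server matched at price $p$ has mean sojourn time at least $1/g(p)$, then optimizes over $p.$ Your approach is instead purely algebraic, via the balance identities and a termwise bound --- which is a nice idea, but it does not go through for $C_{\text{rel}}.$

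Concretely: after substituting $\pi_{i-1}=g(p_i)\pi_i/\lambda,$ your display correctly gives
\[
C_{\text{rel}}=\frac{1}{\lambda}\sum_{i\geq 1}\pi_i\bigl(g(p_i)\,p_{i-1}-\tilde w\, i\bigr),
\]
but the first inequality in your chain silently replaces $g(p_i)\,p_{i-1}$ by $g(p_i)\,p_i.$ This requires $p_{i-1}\leq p_i,$ i.e.\ price \emph{non-decreasing} in the server count, which is precisely the opposite of what a reasonable policy does (and of the optimal bang-bang policy, where $p_{i-1}\geq p_i$). So the termwise estimate fails. You acknowledge this in your last paragraph and note that the identical computation is exact for $C$ --- indeed your argument cleanly proves $C\leq M$ for every policy --- but that does not give $C_{\text{rel}}\leq M,$ since $C_{\text{rel}}-C=\sum_{i\geq 1}\pi_{i-1}(p_{i-1}-p_i)$ is non-negative (not non-positive) exactly for the decreasing-price policies of interest. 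In short, you have proved the bound for static policies and for the unrelaxed objective $C,$ but not the lemma as stated for $C_{\text{rel}}$ under dynamic pricing; the mismatched index needs to be handled, e.g.\ via the paper's sojourn-time argument or some other device that avoids pairing $g(p_i)$ with $p_{i-1}.$
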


Next, we apply these upper bounds to study the competitiveness of
static pricing for the case of linear price sensitivity.

\subsubsection{Competitiveness under linear model}

We begin by specializing the above universal bounds to
Assumption~\ref{ass:linear}, i.e., $g(p) = \beta - \alpha p,$ where
$\beta,\alpha > 0.$
\TRtext{An application of Lemma~\ref{lemma:ub1} for this case yields:
  $C_{\text{rel}} \leq \frac{\beta - \lambda}{\alpha}.$ Next,
  application of Lemma~\ref{lemma:ub2} yields: $C_{\text{rel}} \leq
  \max_{p \in \mathbb{R}} \left(p - \frac{\tilde{w}}{\beta - \alpha
    p}\right) = \frac{\beta - 2\sqrt{\tilde{w}\alpha}}{\alpha}.$}
Combining the two bounds above together, under
Assumption~\ref{ass:linear}, for any policy,
\begin{equation}
  \label{eq:upper_bound}
  C_{\text{rel}} \leq \frac{\beta}{\alpha} - \max\left(\frac{\lambda}{\alpha},\frac{2\sqrt{\tilde{w}\alpha}}{\alpha}\right).
\end{equation}
Clearly, the condition $\beta > 2\sqrt{\tilde{w}\alpha}$ is a necessary
condition for positive objective value under any policy. Now the
optimal static price is given by
$p^* = \argmax_{p \in [p_{\min}, p_{\max}]} \left(p - \frac{\tilde{w}}{\beta - \alpha p - \lambda}\right).$
If
$\mu_{\max} = \beta - \alpha p_{\min} \geq \lambda + \sqrt{\alpha
  \tilde{w}},$ then
$p^* = \frac{\beta - \sqrt{\alpha \tilde{w}} - \lambda}{\alpha}.$ The
static policy that always chooses the price $p^*$ would then have
payoff
\begin{equation}
  \label{eq:opt_static_payoff}
  C_{\text{rel}}(p^*) = \frac{\beta - 2\sqrt{\alpha \tilde{w}} - \lambda}{\alpha}.
\end{equation}

Comparing~\eqref{eq:upper_bound} and \eqref{eq:opt_static_payoff}, it
follows that so long as
$\mu_{\max} \geq \lambda + \sqrt{\alpha \tilde{w}},$ i.e., it is
feasible to maintain a $\sqrt{\alpha \tilde{w}}$ slack in the customer
arrival rate relative to the server arrival rate, the reduction in
payoff from $\frac{\beta}{\alpha}$ under the optimal static pricing
policy is at most twice that under any policy. Reframing the objective
as a minimization of the payoff reduction from its (unattainable)
upper bound $\frac{\beta}{\alpha},$ this implies a competitive ratio
of at most two.

\subsubsection{Competitiveness under non-linear model}

We now consider the more general concave price sensitivity model
specified by Assumption~\ref{ass:concave}.
Specializing our two upper bounds to this particular choice of
$g(\cdot),$ we get that under any policy,
\begin{equation}
  \label{eq:upper_bound_concave}
  C_{\text{rel}} \leq \frac{\beta}{\alpha} -
  \max\left(\frac{\lambda^{1/\theta}}{\alpha}, \frac{B}{\alpha}
  \right),
\end{equation}
where $B := (\tilde{w}\alpha\theta)^{1/(\theta+1)} (1 + 1/\theta).$

Unlike in the linear case however, this non-linear model for
$g(\cdot)$ does not admit a closed form characterization of the
optimal static price. We thus consider two separate cases based on
which term contributes to the max in
\eqref{eq:upper_bound_concave}. For each of these cases, a different
reasonable choice of static price is considered.

\noindent {\bf Case 1 (heavy traffic):}
$\lambda^{1/\theta} \geq B$

Consider the static policy that sets the price $p$ such that
$\mu = (\beta - \alpha p)^{\theta} = \gamma \lambda,$ where
$\gamma > 1.$ Clearly, such a $\gamma$ exists, given that
$\mu_{\max} > \lambda.$ The payoff under this policy is
\begin{align*}
  C_{\text{rel}} &= \frac{\beta}{\alpha} - \frac{(\gamma \lambda)^{1/\theta}}{\alpha} - \frac{w}{(\gamma-1) \lambda}.
\end{align*}
For large $\lambda,$ the last term above would be negligible, meaning
the payoff reduction from $\frac{\beta}{\alpha}$ would be
(approximately) at most a factor of $\gamma^{1/\theta}$ of that under
any policy.

\noindent {\bf Case 2 (light traffic):}
$\lambda^{1/\theta} < B$

In this case, consider the static policy that sets the price such that
$\mu = (\beta - \alpha p)^{\theta} = \gamma B^{\theta},$ where
$\gamma > 1.$ Such a choice is of course feasible only when
$\mu_{\max} > B^{\theta}.$ If so, the cost under this policy is
bounded as:
\begin{align*}
  C_{\text{rel}} &\geq \frac{\beta}{\alpha} - \frac{\gamma^{1/\theta} B}{\alpha} - \frac{w}{(\gamma-1) B^{\theta}}.
\end{align*}

For large $B,$ the last term above would be negligible, meaning the
payoff reduction from $\frac{\beta}{\alpha}$ would be (approximately)
at most a factor of $\gamma^{1/\theta}$ of that under any policy, as
before.

\subsection{Numerical experiments}
\label{sec:loss_numerics}

Since we have used the relaxed objective~$C_{\text{rel}}$ in the
preceding sections, we now present some numerical results illustrating
the connection between $C_{\text{rel}}$ and $C.$ Specifically, we
compare both objectives over the (one-dimensional) space of bang-bang
policies of the kind we proved as optimal for $C_{\text{rel}}$ for
linear price sensitivity.

In Figure~\ref{fig:Obj_comparison}, we plot $C_{\text{rel}}$ and $C$
as a function of a single parameter~$x$ that specifies the bang bang
policy, as described in Section~\ref{sec:bang_bang_optimal}, for
different values of~$w.$ We note that increasing the weight~$w$ on the
holding cost decreases the objective values, and decreases the optimal
choice of $x,$ as expected. However, what is interesting to note is
that the optimal choice of $x$ under the relaxed objective matches
almost perfectly with the optimal choice under the original
objective. This suggests that the optimal bang-bang policy under the
relaxed objective is also a near optimal choice (within the class of
bang-bang policies) for our original objective.

\begin{figure} [t]
  \centering
  \subfigure [$w = 0.05$]
  {\includegraphics[width=0.4\columnwidth]{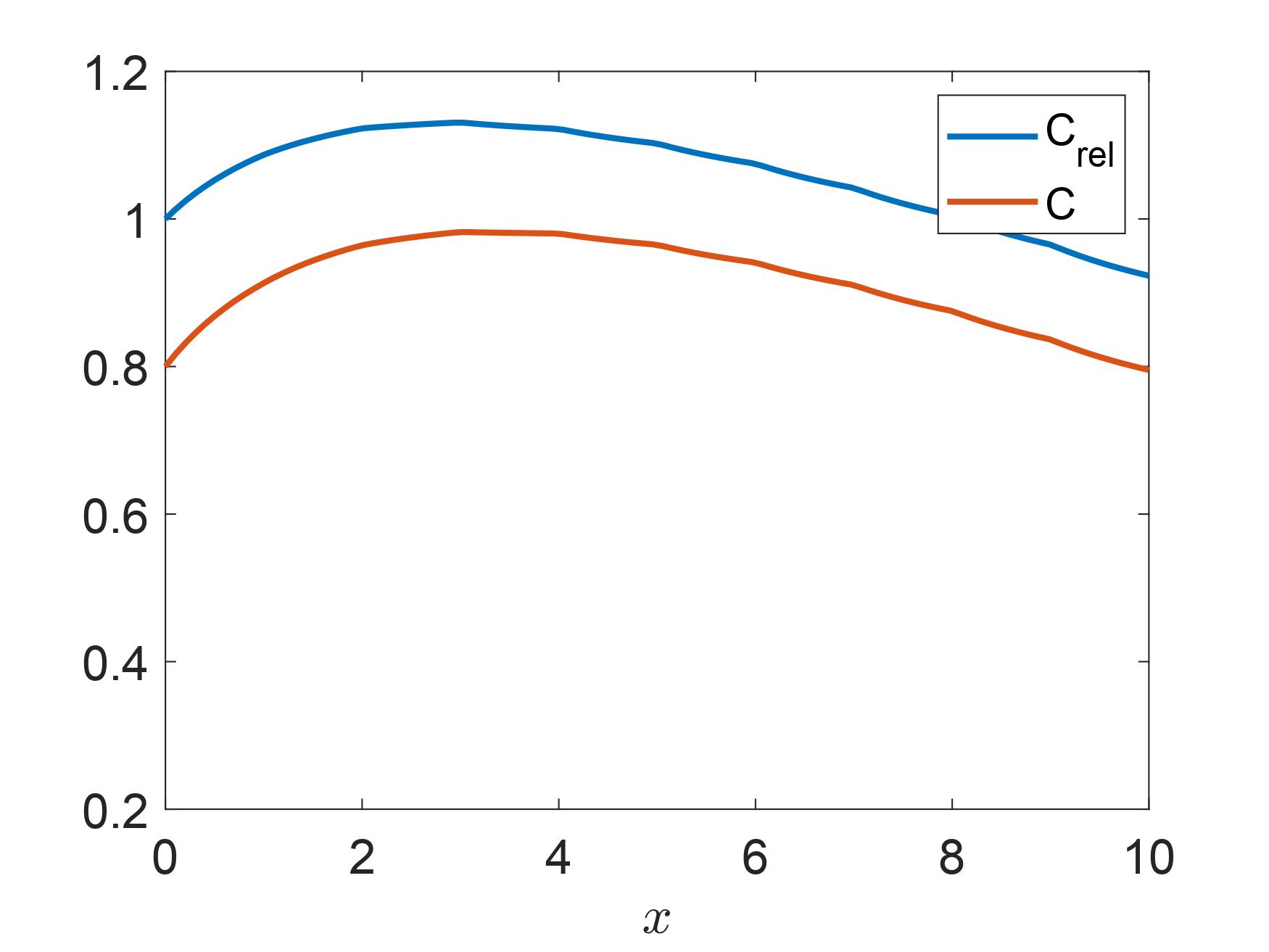}
  }
    \hspace{2 mm}
  \subfigure [$w = 0.1$]
  {\includegraphics[width=0.4\columnwidth]{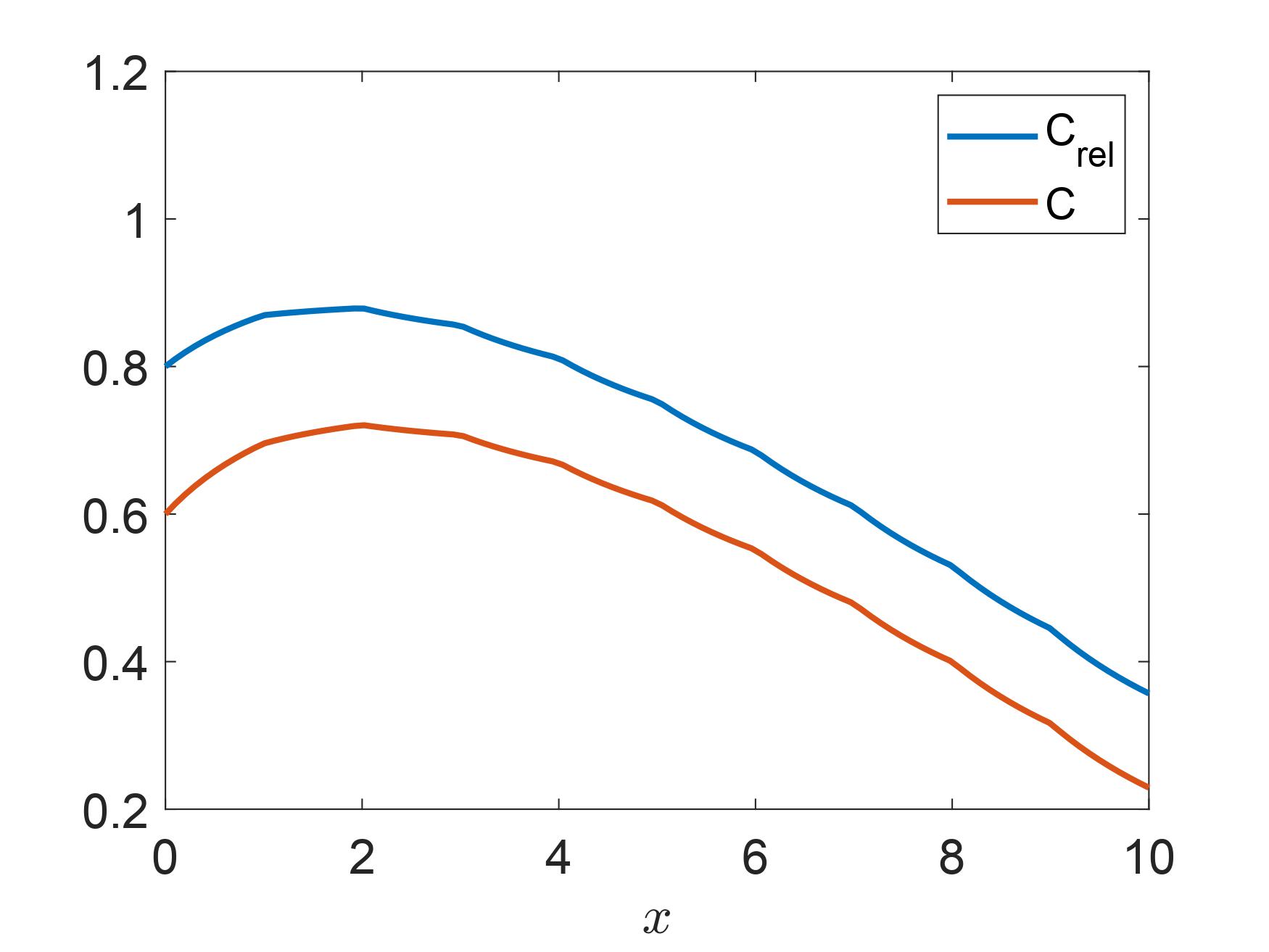}
  }
  \caption{Comparison of the original objective~$C$ and the relaxed
    objective~$C_{\text{rel}}$ for bang-bang policies. The policy parameters
    are related to the $x$-axis label as
    follows: $\ell = \ceil{x},$ $p = p_{\max} - (\ceil{x}-x)(p_{\max}-p_{\min}).$
    Chosen system parameters are as follows:
    $\lambda = 2,$ $p_{\min} = 1,$ $p_{\max} = 2,$ $\beta = 3.5,$ $\alpha = 1.$ 
    \label{fig:Obj_comparison}}
\end{figure}

\section{Queueing Model}
\label{sec:queueing}

Consider a discrete time system,  where the arrival rate for servers is insensitive to price and is equal to $\lambda(t) = \lambda$ for each time $t$\footnote{With abuse of notation, we are calling $\lambda$ as the arrival rate with a discrete time system similar to the continuous time loss model of Section \ref{sec:loss}.}. The customers respond to price set by the platform, and the arrival rate of customers in any time slot $t$ is $\mu(p(t))$ if the price chosen by the platform is $p(t)$ at time $t$. 
We assume the natural model, where $\mu(p)$ is a non-increasing continuous function of $p$. In addition, we assume that  $\mu(p)$ is a concave function, and  $p \mu(p)$ is a unimodal function of $p$, following prior work \cite{sood2018pricing}.
Recall that for notational convenience in Section \ref{sec:loss}, we assumed that the customer arrival rate is $\mu=g(p)$, while in this section, we use simply $\mu(p)$.

Under this model, let $A(t)$ and $B(t)$ be the number of servers and
customers that arrive in time slot $t$, following i.i.d. processes,
with rate $\lambda$ and $\mu(p(t))$, respectively. In particular,
$\lambda = \bbE\{A(t)\}$ and $\mu(p(t)) = \bbE\{B(t)\}$.  Let $M(t)$
be the set of customers that depart (number of customer-server pairs
that are matched) at time $t$. Since there are two queues
corresponding to servers and customers, making both queues stable
(that allows the existence of steady state distribution)
simultaneously is not possible without some exogenous constraint
(unless we have a separate price lever for each queue, as is done
in~\cite{mahavir2020dynamic}). So to keep the server queue stable, we
consider an upper limit ${\bar S}$ on the number of outstanding
servers, while ensuring the stability of customer queue will be part
of the considered problem. Thus, a server arriving when the server
queue size is ${\bar S}$ is not admitted.

 The number of servers $S(t)$, and
customers $C(t)$, in the system at time $t$, evolve as follows.
\begin{align}\label{eq:qupdate1}
  S(t+1) & = \min\{(S(t) + A(t) - |M(t)|)^+, {\bar S}\}, \\\label{eq:qupdate2}
  C(t+1) &  = (C(t) + B(t) - |M(t)|)^+,
\end{align}
where $|M(t)|$ is the number of customer-server pairs that are matched
by the platform in time slot $t$, and $(x)^+ = \max\{0,x\}$. 
The evolution of $S(t)$ and $C(t)$ is hence coupled via $M(t)$.  

\begin{remark}Compared to Section \ref{sec:loss}, where we considered a loss model, in this section, customers wait in the queue, and depart only when they are matched to any server. 
Moreover, we are considering a more general system than Section
\ref{sec:loss}, that evolves in discrete time, and the customer and
server arrival processes are not restricted to follow a Poisson
distribution.
\end{remark}

Customer $i$ arriving at time $t$ sees or commits to price $p_{i} = p(t)$, and let the platform make profit of $p_{i}$ when customer $i$ is matched (departs) to some server at time $t'\ge t$. 
Thus, the profit made by the platform at time $t$ is determined by the set of $M(t)$ customers that depart at time $t$, and the price they saw when they arrived $p_i, i \in M(t)$.
Thus, given the price $p(t)$ chosen by the platform, its profit  (that is a
function of price $p(t)$) is given by
\begin{equation}\label{eq:profit}
\bV = \lim_{T\rightarrow \infty}\frac{1}{T}\sum_{t=1}^TV\left(\sum_{i\in M(t)} p_i\right), \end{equation}
where $V$ is a non-decreasing concave function. 

Note that in defining the platform's profit \eqref{eq:profit}, we have not directly accounted for the payment to the servers, however, that is implicitly captured by assuming that the platform keeps a constant factor of the profit to itself and distributes the rest uniformly across all servers. Thus, maximizing profit, is equivalant to maximizing the payment to the servers. Servers not being incentivised per-customer matching is well justified following \cite{banerjee2015pricing}, which shows that servers payoffs are better modelled as long-term rewards.

The optimization problem that we consider is as follows. \begin{equation}\label{eq:profit1}
\max_{p(t), M(t)}\quad \bV \quad \text{s.t.} \ \underbrace{\lim_{T\rightarrow \infty}\frac{1}{T}\sum_{t=1}^T\bbE\{C(t)\} < \infty}_{\text{stability of 
customer queue}}.
\end{equation}
The stability condition in \eqref{eq:profit1} takes care of the fact
that the delay seen by arriving customers is bounded.

\begin{remark}
  An alternate formulation to \eqref{eq:profit1} is to maximize $\bV$
  subject to a constraint on the expected delay seen by the customers,
  or maximize a linear combination of $\bV$ and expected delay seen by
  the customers. Both these alternatives are, however, more
  challenging to solve in the setting considered, where we are not
  considering any scaling limit regime,
  unlike~\cite{banerjee2015pricing} and similar papers. In what will
  follow, the algorithm we propose to solve \eqref{eq:profit1}, will
  have a parameter that will tradeoff the sub-optimality gap (Theorem
  \ref{thm:lbq}) and the expected delay (Lemma \ref{lem:delay}) seen
  by the customers. Thus, given a constraint on the expected delay
  seen by the customers, we can tune the parameter and bound the
  sub-optimality gap.

\end{remark}


Let the  {\bf optimal price} be $p^\star(t)$ and {\bf optimal matching decision} be $M^\star(t)$ to maximize \eqref{eq:profit1} under the stability constraint, and let the optimal profit be $\bV_\opt$.
Next, we upper bound  $\bV_\opt$. Towards that end, we define a {\bf critical} quantity $p^\star$, as follows.
\begin{equation}\label{eq:pstar}
p^\star =\arg \max_{p, \lambda \ge \mu(p)} p\cdot \mu(p).
\end{equation}

Note that since $\mu(p)$ is assumed to be a non-increasing continuous concave function of $p$, it follows that  $p\mu(p)$ is a concave function. Using this fact, we get the following upper bound on $\bV_\opt$.

%
\begin{lemma} \label{lem:ub}
$ \bV_\opt  \le V(p^\star \mu(p^\star)).$
\end{lemma}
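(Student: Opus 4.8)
The plan is to bound the time-averaged profit by moving the expectation inside the concave function $V$ via Jensen's inequality, and then to argue that the long-run average matching rate cannot exceed $\lambda$ (the server arrival rate), and that the price-weighted matching rate cannot exceed $p^\star \mu(p^\star)$. First I would use concavity of $V$ and Jensen's inequality to write
\begin{equation*}
\bV = \lim_{T\to\infty} \frac{1}{T}\sum_{t=1}^T V\!\left(\sum_{i\in M(t)} p_i\right) \le \lim_{T\to\infty} V\!\left(\frac{1}{T}\sum_{t=1}^T \sum_{i\in M(t)} p_i\right),
\end{equation*}
so it suffices to show the long-run average price-weighted departure rate $\bar r := \lim_{T\to\infty}\frac{1}{T}\sum_{t=1}^T \sum_{i\in M(t)} p_i$ satisfies $\bar r \le p^\star\mu(p^\star)$, and then invoke monotonicity of $V$.

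The key structural fact is a flow-conservation (rate-balance) argument. On the server side, since $S(t)$ is bounded in $[0,\bar S]$, the long-run average departure rate of matched pairs $\lim_T \frac1T\sum_t |M(t)|$ is at most the long-run average admitted-server rate, which is at most $\lambda$; on the customer side, stability of $C(t)$ (the constraint in \eqref{eq:profit1}) forces $\lim_T\frac1T\sum_t|M(t)| \le \lim_T\frac1T\sum_t B(t)$, i.e.\ the average match rate is at most the average customer arrival rate. I would make this rigorous by summing the queue recursions \eqref{eq:qupdate1}--\eqref{eq:qupdate2} telescopically, dividing by $T$, and using boundedness of $S(t)$ (resp.\ the stability hypothesis $\lim_T \frac1T\sum_t \bbE\{C(t)\} < \infty$, which implies $C(T)/T \to 0$ in a suitable sense) to kill the boundary terms. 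Combining the two gives that the average match rate $m := \lim_T\frac1T\sum_t|M(t)|$ satisfies $m \le \lambda$ and $m \le \bar\mu$, where $\bar\mu$ is the long-run average customer arrival rate.

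Next I would relate $\bar r$ to $m$ and the prices. Writing $\bar p$ for the (time- and match-weighted) average price seen by departing customers, we have $\bar r = \bar p \cdot m$ in the appropriate averaged sense. Because each departing customer $i$ saw price $p_i$ and the arrival rate while that price was posted was $\mu(p_i)$, a renewal-reward / little's-law style accounting shows the match-weighted average of $p_i$ times $m$ is bounded by the average of $p(t)\mu(p(t))$, which by unimodality/concavity of $p\mu(p)$ and the constraint $m \le \lambda$ (so the relevant prices satisfy $\mu(p) \le \lambda$ on average, up to a convexity argument) is at most $\max_{p:\,\mu(p)\le\lambda} p\,\mu(p) = p^\star\mu(p^\star)$ by definition \eqref{eq:pstar}. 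Hence $\bar r \le p^\star\mu(p^\star)$ and the result follows.

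The main obstacle I anticipate is the last step: making the accounting that ``$\bar r \le$ average of $p(t)\mu(p(t))$, which is $\le p^\star\mu(p^\star)$'' fully rigorous when the price $p(t)$ is state-dependent and possibly randomized, since one must be careful that the customers who actually get matched are a fair sample (in the rate sense) of the customers who arrive, and that averaging $p(t)\mu(p(t))$ against a distribution whose mean match rate is $\le\lambda$ still respects the constraint region $\{p:\mu(p)\le\lambda\}$ over which $p\mu(p)$ is maximized — this needs concavity of $\mu$ together with a Jensen-type argument on $p\mapsto p\mu(p)$. Everything else is routine telescoping and Jensen.
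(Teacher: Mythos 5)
Your high-level plan is essentially the one the paper uses: Jensen on the concave, non-decreasing $V$ to pull the time-average inside, followed by a rate-balance/stability argument and concavity of $p\mu(p)$ to bound the resulting long-run average revenue rate by $p^\star\mu(p^\star)$. The difference is in how the middle step---translating ``long-run average matched revenue'' into an average of $p\mu(p)$---is carried out, and this is exactly the step you flag as your main obstacle. The paper makes it concrete with a revenue-conservation (telescoping) identity: every arriving customer's price is accounted for as either matched or still in queue, so
\[
\sum_{t=1}^T \sum_{i\in M(t)} p_i \;=\; \sum_{t=1}^T p(t)\,B(t) \;-\; \sum_{j\in C(T+1)} p_j .
\]
The paper then (i) restricts attention to stationary Markov policies that use a finite price set $\{p_k\}$ with long-run usage fractions $\theta_k$, (ii) rewrites $\sum_t p(t)B(t) = \sum_k p_k \sum_{i=1}^{N_{k,T}} B_{k,i}$ where $N_{k,T}$ is the number of slots price $p_k$ was posted, and (iii) invokes the strong law of large numbers for the i.i.d. per-usage arrivals $\{B_{k,i}\}_i$ to get $\tfrac1T\sum_t p(t)B(t) \to \sum_k \theta_k\, p_k\mu(p_k)$ a.s., with the boundary term $\tfrac1T\sum_{j\in C(T+1)} p_j \to 0$ by stability. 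After that, the stability constraint $\sum_k\theta_k\mu(p_k)\le\lambda$ and concavity of $p\mapsto p\mu(p)$ give $\sum_k \theta_k p_k\mu(p_k)\le p^\star\mu(p^\star)$.

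By contrast, your route goes through $\bar r = \bar p\cdot m$ where $\bar p$ is a \emph{match-weighted} average price; this does not directly connect to $p(t)\mu(p(t))$ without precisely the kind of conservation identity the paper writes, and your ``renewal-reward / Little's-law style accounting'' sentence is a placeholder rather than a proof. Your additional observation that $m\le\lambda$ (server-side) is not actually needed: the customer-side stability already pins down $m$ to the long-run average customer arrival rate, and the constraint that matters for the final step is $\sum_k\theta_k\mu(p_k)\le\lambda$, which is the customer-queue stability condition. In short: correct skeleton, same final two inequalities as the paper, but the accounting step in the middle is where a concrete argument is missing, and the paper's telescoping identity plus SLLN over per-price usage counts is the way to fill it. (Note also the paper's tacit restriction to stationary Markov policies with finitely many prices; you implicitly try to handle fully general policies, which makes the accounting strictly harder.)
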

%




Lemma \ref{lem:ub} essentially says that the largest profit is
possible if the price is set as constant $p^\star$, and the number of
matched customer-server pairs $|M(t)|$ is a constant equal to the
expected customer arrival rate $\mu(p^\star)$ at price $p^\star$. The
proof follows from a straightforward application of Jensen's
inequality, given that $V$ is a concave function.
 
 Next, we propose an algorithm for setting the price $p(t)$, and choosing the number of matched customers $|M(t)|$  in each time slot $t$, and lower bound its profit.
 
{\bf Algorithm} $\cA$: Following the definitions of S(t) \eqref{eq:qupdate1} and C(t) \eqref{eq:qupdate2}, let $N(t) = \min\{S(t), C(t)\}$. Solving \eqref{eq:pstar}, either we have $\lambda > \mu(p^\star)$ or $\lambda = \mu(p^\star)$.

 If $\lambda > \mu(p^\star)$, the algorithm chooses constant price
 $p(t) = p^\star$, $\forall \ t$, defined in \eqref{eq:pstar}, and the
 number of customer-server pairs {\bf matched} in slot $t$ using FIFO
 schedule are
\begin{equation}\label{eq:alg}
|M_\cA(t)| = \begin{cases} 0  & \text{if} \ N(t) <  \mu(p^\star) - \delta, \\
\mu(p^\star) - \delta & \text{if} \ \mu(p^\star) - \delta \le N(t) \le U/2,\\
 \mu(p^\star) + \delta &\text{if} \ N(t) > U/2,
\end{cases}
\end{equation} 
where $U < {\bar S}$ is some threshold, and $\delta >0$ that will be chosen later.  Threshold $U$ will control both the profit made by the platform as well as the expected waiting time of any customer.

If $\lambda = \mu(p^\star)$, then $p(t) = p^\star + \epsilon$ (to ensure that the customer arrival rate is lower than the server arrival rate), while choice of $|M_\cA(t)|$ remains unchanged as in \eqref{eq:alg}. 

For the rest of this section, we consider the case when
$\lambda > \mu(p^\star)$. All results will go through even when
$\lambda = \mu(p^\star)$ with an additional $O(\epsilon)$
penalty. 

\begin{remark}Note that we are not enforcing the integrality constraint on $|M_\cA(t)|$, similar to prior works \cite{lin2012dynamic, berg2019hesrpt} on dynamic decision problems with server-customer queues, where the consideration is that the number of servers/customers is large enough at an aggregate scale, and $|M_\cA(t)|$ can be thought of as the fraction of customers served. 
\end{remark}

With algorithm $\cA$, for both cases,  $\lambda > \mu(p^\star)$ or 
$\lambda = \mu(p^\star)$, the arrival rate of servers is more than the
arrival rate of customers with the
algorithm, ensuring stability of the customer queue, satisfying the constraint in \eqref{eq:profit1}. 
Thus, we only need to derive a lower bound on the profit of algorithm $\cA$, for which we need the following definition.
Let $\sigma^2_C = \lim_{T\rightarrow \infty} \frac{1}{T} \text{var}\left(\sum_{t=1}^T B(t)\right)$ be the variance of process $B(t)$, number of arrivals of customers at time $t$.
\begin{remark}
With the algorithm $\cA$ that charges price $p(t) = p^\star$, the process $B(t)$ (the number of customer arrivals with mean $\mu(p^\star)$) is an i.i.d. process. Therefore, $\sigma^2_C$ is well-defined.
\end{remark}

The main result of this section is the following.

\begin{theorem}\label{thm:lbq}
Choosing $\delta = \alpha\left( \frac{\log U}{U}\right),$ where $\alpha = \beta \sigma^2_C$ and $\beta\ge 2$, for algorithm $\cA$, 
  $$\bV_\cA\ge \bV_\opt - {\cal O}\left(\left(\frac{\log U}{U}\right)^2\right).$$
\end{theorem}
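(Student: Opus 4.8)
The plan is to compare the profit of algorithm $\cA$ to the upper bound $\bV_\opt \le V(p^\star \mu(p^\star))$ from Lemma~\ref{lem:ub}. Since $\cA$ charges the constant price $p^\star$, its per-slot profit is $V(\sum_{i \in M_\cA(t)} p_i) = V(p^\star |M_\cA(t)|)$, so $\bV_\cA = \lim_{T\to\infty}\frac{1}{T}\sum_t V(p^\star |M_\cA(t)|)$. The key quantity to control is therefore the long-run distribution of $|M_\cA(t)| \in \{0,\ \mu(p^\star)-\delta,\ \mu(p^\star)+\delta\}$, governed by the process $N(t) = \min\{S(t),C(t)\}$ and the three-region threshold rule~\eqref{eq:alg}. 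First I would establish that, because the server arrival rate $\lambda$ strictly exceeds $\mu(p^\star)$ and $U < \bar S$, the server queue $S(t)$ spends a vanishing fraction of time below $U$ (the upward drift of $S$ pushes it to $\bar S$), so that for the purpose of computing the time-average of $|M_\cA(t)|$ we may treat $N(t) \approx C(t)$, i.e. the binding queue is the customer queue. This reduces the analysis to a one-dimensional birth-death-like random walk for $C(t)$ with increments $B(t) - |M_\cA(t)|$: it has negative drift $-\delta$ above level $U/2$, positive drift $\mu(p^\star) - (\mu(p^\star)-\delta) = \delta$ in the band $[\mu(p^\star)-\delta,\ U/2]$ wait—let me restate: in the middle band the matching rate is $\mu(p^\star)-\delta$ so drift is $+\delta$; above $U/2$ it is $-\delta$; below the lower cutoff matching is $0$ so drift is $+\mu(p^\star)$. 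Hence $C(t)$ is a positive-recurrent chain concentrated around level $U/2$, and the stationary probability of being in the "starved" region $N(t) < \mu(p^\star)-\delta$ (where $|M_\cA| = 0$) is the crucial loss term.

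Next I would bound that starvation probability. The excursion of $C(t)$ from near $U/2$ down to below $\mu(p^\star)-\delta$ requires a net downward displacement of order $U/2$ against a drift that is $+\delta$ once $C$ re-enters the middle band — but note the drift in the middle band is only $+\delta = \Theta(\log U / U)$, so this is a large-deviations / hitting-time estimate for a random walk with small positive drift $\delta$ and bounded-variance increments (variance $\approx \sigma_C^2$). Standard arguments (a Lyapunov/exponential supermartingale with exponent tuned to $\delta/\sigma_C^2$, exactly the kind of Hajek-type bound) give that the stationary mass below $\mu(p^\star)-\delta$ decays like $\exp(-c\, \delta\, (U/2) / \sigma_C^2)$. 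With the choice $\delta = \beta \sigma_C^2 (\log U)/U$ and $\beta \ge 2$, the exponent is $\ge c' \beta \log U$, so this probability is $O(U^{-\beta c'})$, which we can make $O((\log U / U)^2)$ (indeed much smaller) — this is precisely why the logarithmic factor is inserted into $\delta$. Similarly the chain almost never climbs so high that the perturbation structure breaks (i.e. stays well below $\bar S$), which is needed for consistency of the "$N \approx C$" reduction.

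Now I would assemble the profit comparison. Let $q_0, q_-, q_+$ be the stationary fractions of slots with $|M_\cA(t)| = 0,\ \mu(p^\star)-\delta,\ \mu(p^\star)+\delta$ respectively. Flow balance for $C(t)$ (Little/rate-conservation: long-run arrival rate $\mu(p^\star)$ equals long-run departure rate) forces $q_-(\mu(p^\star)-\delta) + q_+(\mu(p^\star)+\delta) = \mu(p^\star)$ up to an $o(1/T)$ boundary term, together with $q_0 + q_- + q_+ = 1$ and $q_0 = O((\log U/U)^2)$ from the previous step. Then $\bV_\cA = q_0 V(0) + q_- V(p^\star(\mu(p^\star)-\delta)) + q_+ V(p^\star(\mu(p^\star)+\delta))$. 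Using concavity of $V$ and a second-order Taylor expansion of $V$ around $p^\star\mu(p^\star)$: the first-order terms cancel by the flow-balance identity (the weighted average of $|M_\cA|$ over the non-starved slots is $\mu(p^\star)/(1-q_0) = \mu(p^\star)(1+O(q_0))$), the second-order term contributes $-\Theta(V'' p^{\star 2}\delta^2) = O(\delta^2) = O((\log U/U)^2)$, and the starvation term costs $q_0\,(V(p^\star\mu(p^\star)) - V(0)) = O((\log U/U)^2)$ as well. Summing, $\bV_\cA \ge V(p^\star\mu(p^\star)) - O((\log U/U)^2) \ge \bV_\opt - O((\log U/U)^2)$, as claimed. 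The main obstacle I anticipate is the first step/third step coupling: rigorously justifying that the two-dimensional coupled recursion~\eqref{eq:qupdate1}--\eqref{eq:qupdate2} can be reduced to the scalar analysis of $C(t)$ — one must show the server queue stays pinned near $\bar S \gg U$ with overwhelming probability so that $N(t) = \min\{S(t),C(t)\} = C(t)$ on all but a negligible set of slots, and that the $\min$ does not introduce a downward bias on $N$; handling this cleanly (e.g. via a separate drift argument for $S(t)$ and a union bound over the relevant excursion) is where the real work lies, with the small-drift hitting-time estimate for $C(t)$ a close second.
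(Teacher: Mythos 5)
Your outer argument — upper bound via Lemma~\ref{lem:ub}, rewrite $\bV_\cA$ as a convex mixture over $|M_\cA(t)|\in\{0,\mu(p^\star)-\delta,\mu(p^\star)+\delta\}$, second-order Taylor expansion of $V$ about $p^\star\mu(p^\star)$, and the flow-balance identity forcing $(q_+-q_-)\delta$ to be of the same order as the starvation probability so that the first-order term is dominated by the $O(\delta^2)$ second-order term — is precisely what the paper does (its $\rho^\pm$, $V^\pm$, $P_{\text{outage}}$, and ``conservation of customer arrivals'' are your $q_\pm$, Taylor expansions, $q_0$, and flow balance). Where you diverge is in the engine that proves the outage bound $\bbP(N<\mu(p^\star)-\delta)=O(U^{-\beta})$, which is Lemma~\ref{lem:hittingprob}. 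You propose a Hajek/Lyapunov exponential-supermartingale drift argument for the scalar walk $C(t)$, having first argued that $S(t)$ is pinned near $\bar S$ by its uniformly positive drift $\lambda-|M_\cA(t)|>0$, so that $N(t)=\min\{S(t),C(t)\}\approx C(t)$. The paper instead never decouples the two queues in this way: it decomposes the outage event over the index $i$ of the most recent interval during which $N\ge U/2$, shows each such event $E_i$ forces a large negative excursion of either $\sum(|M_\cA|-B)$ or $\sum(|M_\cA|-A)$ (a union bound $P(E_i)\le P(F_{i1})+P(F_{i2})$ that handles the $\min$ structure directly), reduces to the slower-arrival process by stochastic dominance, and then applies a Chernoff/Cramér bound with semi-invariant log-moment generating function, finally extracting $\tau^*=2\delta/\sigma_C^2+o(\delta)$ via Lemma~\ref{lem:exp}. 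Both routes give the same exponential-in-$\delta U$ decay and hence the same rate with the choice $\delta=\beta\sigma_C^2\log U/U$; your Hajek-style argument is perhaps cleaner conceptually, but it leaves open exactly the coupling step you flag as ``where the real work lies'' (making the ``$N\approx C$'' reduction rigorous), which the paper's union bound over $F_{i1},F_{i2}$ sidesteps entirely. So: same proof skeleton, genuinely different proof of the key probability estimate, and your proposed shortcut around the two-queue coupling is a real delta that would need to be filled in.
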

Thus, algorithm $\cA$ is near-optimal and the sub-optimality gap is governed by the choice of threshold $U$.

\emph{Proof Sketch:} Let $N$ be distributed as as per the steady state
distribution of $N(t)$ with algorithm $\cA$.  By definition, algorithm
$\cA$ can achieve profit close to $V(p^\star \mu(p^\star))$ (where
$\bV_\opt \le V(p^\star \mu(p^\star))$) as long as $N > \mu(p^\star) -
\delta$.  So the main result is to show that for $\cA$,
$P_{\text{outage}} = \bbP(N < \mu(p^\star) - \delta ) = {\cal
  O}\left(U^{-\beta}\right)$ for $\delta = \alpha\left( \frac{\log
  U}{U}\right),$ where $\alpha = \beta \sigma^2_C$ and $\beta\ge 2$,
which we prove in Lemma \ref{lem:hittingprob}. \qed

\begin{figure}[!h]
    \centering
    \includegraphics[scale=0.35]{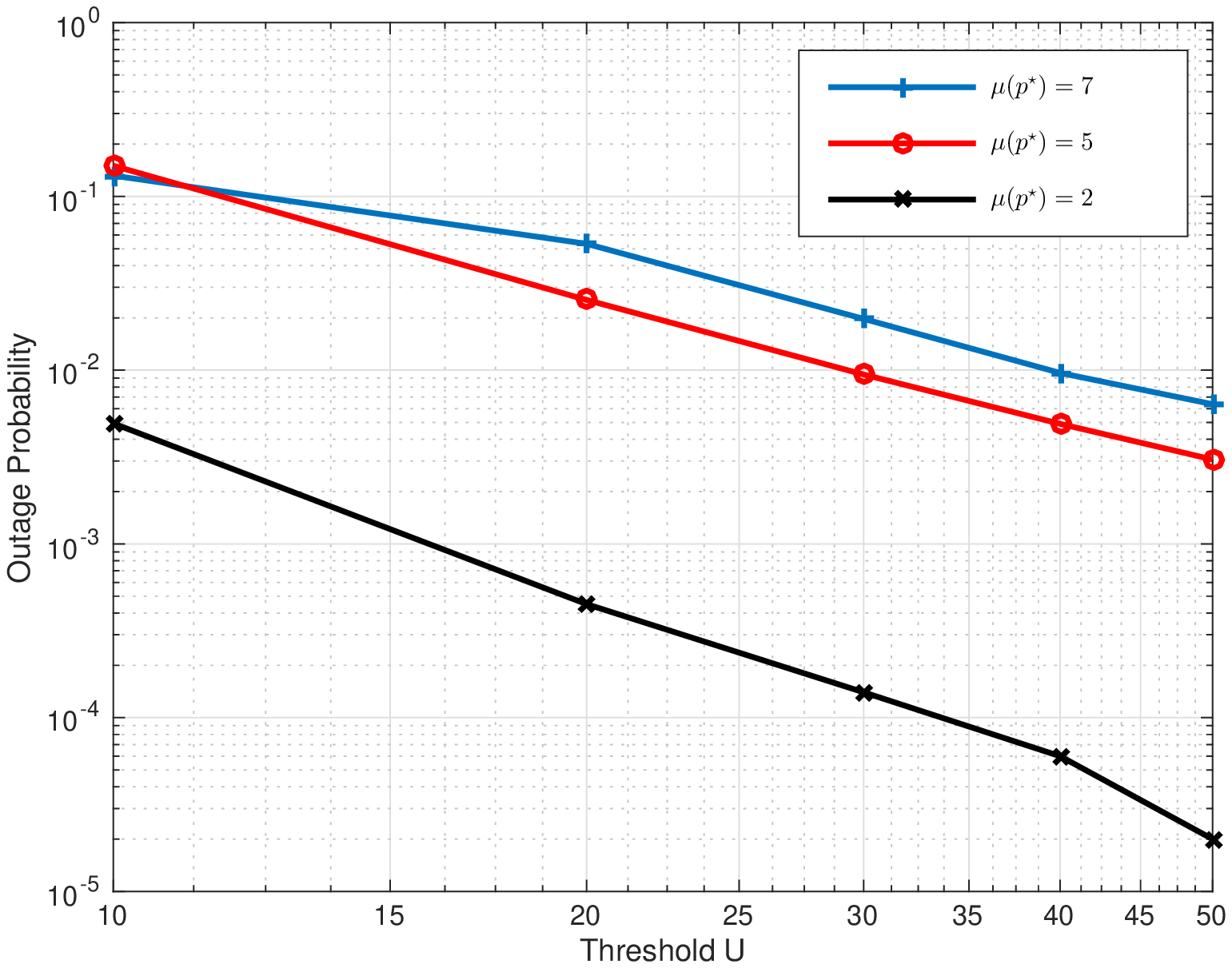}
    \caption{ Outage probability with Poisson driver and customer arrivals with different values of $\mu(p^\star)$ and $\Delta=.1$, respectively, as a function of $U$.}
    \label{fig:outage}
\end{figure}

\begin{figure}[!h]
    \centering
    \includegraphics[scale=0.35]{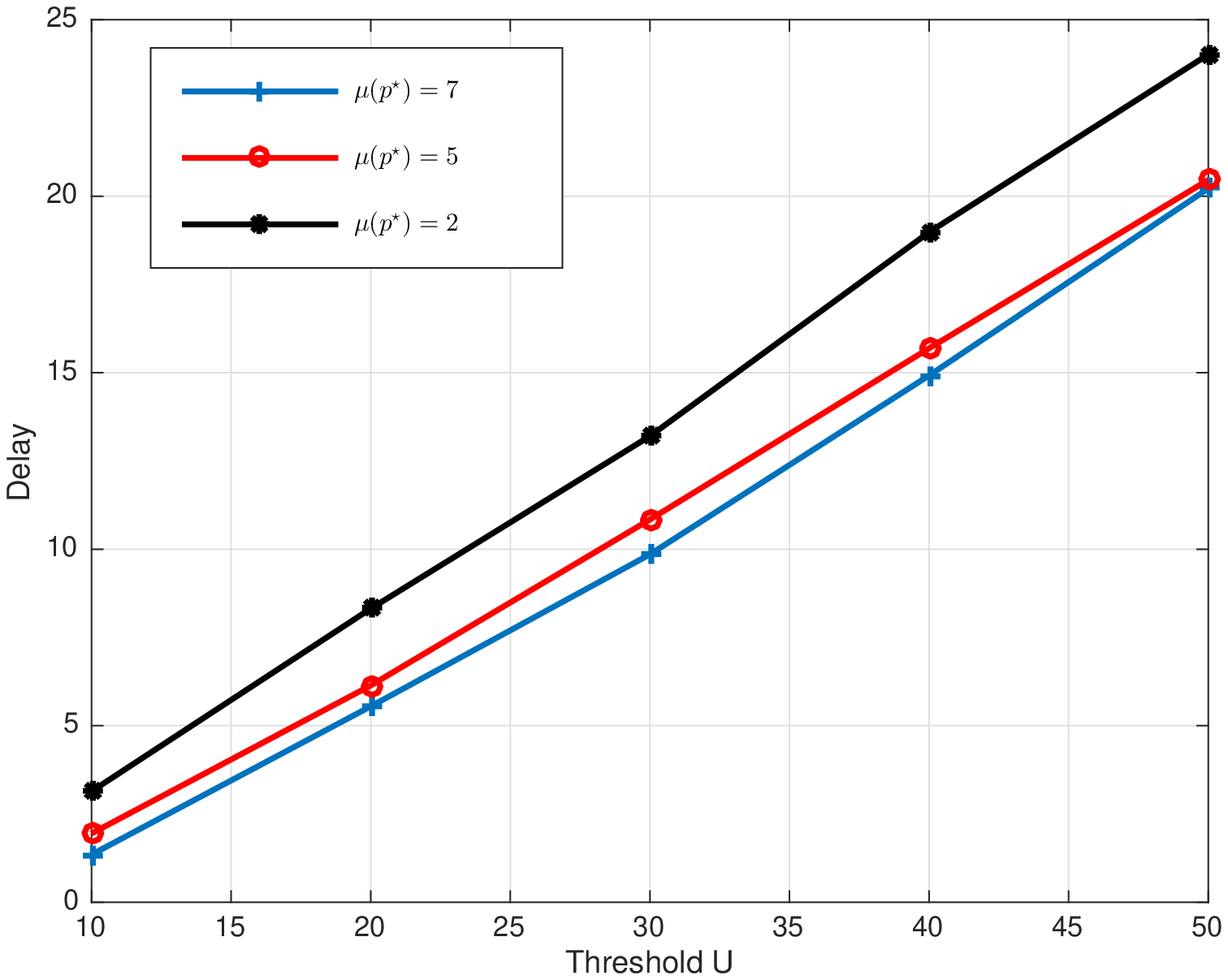}
    \caption{Expected delay experienced by a customer with Poisson driver and customer arrivals with different value of $\mu(p^\star)$ and $\Delta=.1$, respectively, as a function of $U$.}    \label{fig:delay}
\end{figure}

\begin{lemma}\label{lem:hittingprob}
  For algorithm $\cA$ \eqref{eq:alg} with price $p(t) = p^\star, \ \forall \ t$, $$P_{\text{outage}}=\bbP(N <  \mu(p^\star) - \delta ) = {\cal O}\left(U^{-\beta}\right),$$ by choosing $\delta = \alpha\left( \frac{\log U}{U}\right),$ where $\alpha = \beta \sigma^2_C$ and $\beta\ge 2$. 
\end{lemma}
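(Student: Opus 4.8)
The plan is to bound $P_{\text{outage}}$ by treating the two queues separately and then running a drift (exponential‑Lyapunov) estimate on the customer queue. Since $N(t)=\min\{S(t),C(t)\}$, in steady state a union bound gives $P_{\text{outage}}\le \bbP(S<\mu(p^\star)-\delta)+\bbP(C<\mu(p^\star)-\delta)$. The server queue is easy to dispose of: by \eqref{eq:alg} the algorithm matches at most $\mu(p^\star)+\delta$ pairs per slot, and since $\lambda>\mu(p^\star)$, once $U$ is large enough that $\delta=\alpha(\log U)/U<(\lambda-\mu(p^\star))/2$ the server queue has drift at least $(\lambda-\mu(p^\star))/2$ at every state below $\bar S$. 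As $\bar S>U$ and the per‑slot arrivals are light‑tailed, a standard geometric‑tail bound for a reflected walk with uniformly negative drift (applied to $\bar S-S(t)$) yields $\bbP(S<\mu(p^\star)-\delta)\le K_1 e^{-\kappa_1(\bar S-\mu(p^\star))}=e^{-\Omega(U)}$, which is $o(U^{-\beta})$. So the content of the lemma is the bound $\bbP(C<\mu(p^\star)-\delta)=\mathcal{O}(U^{-\beta})$.

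For the customer queue the key observation — the one that lets me decouple the two‑dimensional chain — is that $C(t)$ has an upward drift of at least $\delta$ on all of $\{C(t)\le U/2\}$, uniformly in $S(t)$. Indeed, whenever $C(t)\le U/2$ we have $N(t)=\min\{S(t),C(t)\}\le U/2$, so by \eqref{eq:alg} at most $\mu(p^\star)-\delta$ pairs are matched; hence on $\{C(t)\le U/2\}$ we have $|M(t)|\le\mu(p^\star)-\delta$ and (since $C(t)\ge|M(t)|$ there, so the positive part is inactive) $C(t+1)=C(t)+B(t)-|M(t)|$ with $\bbE[B(t)-|M(t)|\mid\cF_t]\ge\delta$. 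Consequently the deficit process $W(t):=(U/2-C(t))^+$ satisfies $W(t+1)=(W(t)-\eta(t))^+$ on $\{W(t)>0\}$ with $\eta(t):=B(t)-|M(t)|\ge\hat\eta(t):=B(t)-\mu(p^\star)+\delta$, where the $\hat\eta(t)$ are i.i.d.\ with mean $\delta$ and variance $\sigma^2_C$, while at the reflecting boundary ($C(t)\ge U/2$) $W$ jumps up by at most $\mu(p^\star)+\delta$. Thus $W(t)$ is dominated by a reflected random walk driven by the increments $-\hat\eta(t)$, up to a bounded boundary perturbation.

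Finally I would invoke Kingman's bound for this dominating reflected walk: its stationary law satisfies $\bbP(W>w)\le K_2 e^{-\theta^\ast w}$, where $\theta^\ast>0$ is any solution of $\bbE[e^{-\theta^\ast\hat\eta(t)}]\le 1$; a second‑order estimate of the log‑moment generating function (legitimate under a mild light‑tail/sub‑Gaussian assumption on the arrivals) gives $\bbE[e^{-\theta\hat\eta(t)}]\le e^{-\theta\delta+\theta^2\sigma^2_C/2}$, so $\theta^\ast=2\delta/\sigma^2_C$ is admissible, and the boundary perturbation only inflates $K_2$ by an $e^{\theta^\ast(\mu(p^\star)+\delta)}=1+o(1)$ factor. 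Evaluating at $w=U/2-\mu(p^\star)+\delta$ and substituting $\delta=\alpha(\log U)/U$ with $\alpha=\beta\sigma^2_C$,
\[
  \bbP(C<\mu(p^\star)-\delta)\;=\;\bbP\!\left(W>\tfrac{U}{2}-\mu(p^\star)+\delta\right)\;\le\;K_2\exp\!\left(-\tfrac{2\delta}{\sigma^2_C}\Bigl(\tfrac{U}{2}-\mu(p^\star)+\delta\Bigr)\right)\;=\;K_2\,U^{-\beta}e^{o(1)},
\]
since the exponent equals $\beta\log U-o(1)$; adding the $e^{-\Omega(U)}$ server term gives $P_{\text{outage}}=\mathcal{O}(U^{-\beta})$. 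The routine parts are the server‑queue tail bound and Kingman's inequality; the crux is the structural observation in the second paragraph (drift $\ge\delta$ on all of $\{C(t)\le U/2\}$ regardless of $S(t)$), together with tracking the constant in the exponent accurately — the latter is precisely why $\alpha$ is taken to be $\beta\sigma^2_C$ and why $\sigma^2_C$ appears in the statement. (The hypothesis $\beta\ge2$ is not needed for this lemma; it is imposed so that the downstream sub‑optimality gap in Theorem~\ref{thm:lbq} is $\mathcal{O}((\log U/U)^2)$.)
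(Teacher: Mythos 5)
Your proof is correct, and it takes a genuinely different route from the paper's. The paper works directly with $N(t)$: it partitions time into blocks of length $U/(2L)$, decomposes $\{N(0)=0\}$ according to the last block in which $N\ge U/2$, shows that each such event forces a large accumulated shortfall of either server or customer arrivals ($F_{i1}\cup F_{i2}$), eliminates the server term by stochastic dominance ($A(t)$ has larger mean than $B(t)$, so the ``insufficient-$A$'' event is the less likely one), applies a Chernoff bound to the remaining partial sum of $B(t)$, sums over block indices, lets $L\to\infty$, and then expands the resulting large-deviations rate $\tau^*$ (solving $\cM_C(-\tau^*)+\tau^*(\mu(p^\star)-\delta)=0$) to $2\delta/\sigma^2_C$ via Lemma~\ref{lem:exp}. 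You instead union-bound $\min\{S,C\}$ over its two coordinates at the outset, make the key structural observation that $C(t)\le U/2\Rightarrow |M_\cA(t)|\le\mu(p^\star)-\delta$ \emph{uniformly in $S(t)$}, so the deficit $(U/2-C(t))^+$ is dominated (up to an $O(\mu(p^\star)+\delta)$ boundary offset) by a one-dimensional Lindley walk with i.i.d.\ increments $-\hat\eta(t)$ of mean $-\delta$, and then invoke Kingman's bound with exactly the same exponent $\theta^*=2\delta/\sigma_C^2$. The exponent and final estimate agree; what differs is the packaging. Your version is shorter and makes the decoupling of the two-dimensional chain explicit, which the paper leaves implicit inside the $F_{i1}/F_{i2}$ dichotomy; the paper's version is self-contained (no appeal to a stationary tail bound for reflected walks) and, because it discards the server coordinate by dominance rather than bounding it separately, it does not require exponential moments for the server-arrival process $A(t)$, whereas your $e^{-\Omega(U)}$ tail for $\bar S - S$ implicitly does. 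Two small points worth tightening in a final write-up: (i) the ``$W$ dominated by a reflected walk up to a bounded boundary perturbation'' step is not a pathwise coupling (the two processes can desynchronize at the reflecting barrier); it is cleaner to condition on the last visit of $W$ to $0$ and bound $W(0)\le(\mu(p^\star)+\delta)+\sup_n\sum_{s=1}^n(-\hat\eta_s)$ before applying the Chernoff/maximal inequality; and (ii) the second-order bound $\bbE[e^{-\theta\hat\eta}]\le e^{-\theta\delta+\theta^2\sigma_C^2/2}$ should be stated as holding for $\theta$ in a fixed neighborhood of $0$ under the paper's implicit light-tail assumption (it is what drives Lemma~\ref{lem:exp} in the paper as well), since it is not a universal sub-Gaussian fact.
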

Similar to the Lemma \ref{lem:hittingprob}, we can get an upper bound on the $P(N \ge x)$ for $x\ge U$ as follows. 
\begin{lemma}\label{lem:tail}
  For algorithm $\cA$ \eqref{eq:alg} with price $p(t) = p^\star, \ \forall \ t$, $$\bbP(N \ge x ) = {\cal O}\left(U^{-\beta}\right)$$ by choosing $\delta = \alpha\left( \frac{\log U}{U}\right)$, where $\alpha = \beta \sigma^2_C$ and $\beta\ge 2$, and $x>U$. 
\end{lemma}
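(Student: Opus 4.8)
The plan is to reduce the tail bound on $N$ to a tail bound on the customer queue $C(t)$ and then to control the latter by an exponential Foster--Lyapunov (drift) argument that parallels the proof of Lemma~\ref{lem:hittingprob}, with the direction of the tail reversed. The starting observation is that $N(t)=\min\{S(t),C(t)\}\le C(t)$ deterministically, so $\bbP(N\ge x)\le\bbP(C\ge x)$ and it suffices to bound the stationary upper tail of $C(t)$. The structural fact that drives everything is that whenever $N(t)>U/2$ algorithm $\cA$ matches $|M_\cA(t)|=\mu(p^\star)+\delta$ by \eqref{eq:alg}, so in that regime the one-step increment $C(t+1)-C(t)=B(t)-\mu(p^\star)-\delta$ has mean $-\delta<0$ and long-run variance $\sigma^2_C$: above level $U/2$ the customer queue behaves like a reflected random walk with a small negative drift $-\delta$.

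The one wrinkle is that this negative drift is triggered by $N(t)=\min\{S(t),C(t)\}>U/2$ rather than by $C(t)>U/2$ alone, so I would first dispose of the event that the server queue is small. Since servers arrive at the constant rate $\lambda$ while at most $\mu(p^\star)+\delta<\lambda$ of them are matched per slot (for $U$ large, as $\delta\to0$), $S(t)$ has a uniform positive drift $\lambda-\mu(p^\star)-\delta=\Theta(1)$ toward its cap $\bar{S}$, and a standard geometric drift bound gives $\bbP(S(t)\le U/2)\le e^{-\Theta(U)}$ provided $\bar{S}$ exceeds $U$ by a constant factor (which we may assume). This probability is negligible next to the target $\cO(U^{-\beta})$, and on the complementary event $\{S(t)>U/2\}$ the event $\{C(t)>U/2\}$ already forces $N(t)>U/2$, hence the negative drift.

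With this in hand I would run the standard exponential supermartingale argument on $e^{\eta C(t)}$: pick $\eta>0$ with $\bbE[e^{\eta(B(t)-\mu(p^\star)-\delta)}]\le 1$, which, since $\delta\to0$, may be taken as $\eta=(2-o(1))\delta/\sigma^2_C$ from the Taylor expansion $\bbE[e^{\eta(B(t)-\mu(p^\star)-\delta)}]=1-\eta\delta+\tfrac12\eta^2\sigma^2_C+\cdots$ (assuming, as is implicit in such bounds, that $B(t)$ has a finite moment generating function near the origin). A Hajek-type drift argument, with the rare event $\{S(t)\le U/2\}$, on which $C$ can only jump up by $B(t)$, absorbed into a vanishing correction since $\bbP(S\le U/2)\,e^{\eta\,\cO(x)}$ is still exponentially small in $U$, then yields $\bbP(C\ge x)\le K\,e^{-\eta(x-U/2)}$ for a constant $K$. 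Substituting $\delta=\alpha(\log U)/U$ with $\alpha=\beta\sigma^2_C$ gives $\eta=(2-o(1))\beta(\log U)/U$, so for $x>U$ (whence $x-U/2>U/2$) we obtain $\bbP(N\ge x)\le\bbP(C\ge x)\le K\,U^{-(1-o(1))\beta}=\cO(U^{-\beta})$.

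The step I expect to be the main obstacle is the coupling between the two queues: because the matching rate is governed by $N(t)=\min\{S(t),C(t)\}$ and not by $C(t)$ directly, one has to quantify carefully that $S(t)$ stays above $U/2$ with overwhelming probability and then verify that the exponential drift estimate for $C(t)$ survives the insertion of this rare bad event. A secondary, bookkeeping-type difficulty is calibrating constants so that the exponent of $U$ in the final bound is genuinely at least $\beta$ for all large $U$; this is precisely what the coupled choices $\eta\approx 2\delta/\sigma^2_C$ and $\delta=\beta\sigma^2_C(\log U)/U$, together with the slack afforded by $x>U$, are designed to deliver, and making the $o(1)$ corrections rigorous is the delicate point.
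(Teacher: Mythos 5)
Your proposal is correct in substance but takes a genuinely different technical route from the paper. The paper disposes of Lemma~\ref{lem:tail} by declaring the proof ``identical'' to that of Lemma~\ref{lem:hittingprob}, i.e., a Chernoff bound over backward time intervals: decompose the event $\{N(0)\ge x\}$ by the last interval during which $N$ was at or below $U/2$, bound each piece by the semi-invariant log-MGF of $B(t)$, optimize over the exponential parameter and interval index, and extract the decay rate $\tau^*\approx 2\delta/\sigma_C^2$ via Lemma~\ref{lem:exp}. You instead reduce the upper tail of $N$ to that of $C$ via the deterministic bound $N\le C$, then run a forward-in-time exponential Lyapunov (Hajek-type supermartingale) argument on $e^{\eta C(t)}$, exploiting the negative drift $-\delta$ that \eqref{eq:alg} induces on $C$ once $N>U/2$, and you arrive at the same decay rate $\eta\approx 2\delta/\sigma_C^2$. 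The two approaches hinge on the identical mechanism (negative drift of the customer queue above $U/2$, quadratic CGF expansion near the origin) and deliver the same exponent; yours is arguably more modular since it isolates the queue $C$ and then invokes a textbook drift lemma, whereas the paper's looks backward from the outage time and keeps $N$ coupled throughout. Both share the same delicate point: the leading-order expansion that turns $\delta=\beta\sigma_C^2(\log U)/U$ into a $U^{-\beta}$ tail must control the $o(1)$ corrections, which you correctly flag.

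One gap you identify but should be explicit about: your step ``$\bbP(S(t)\le U/2)=e^{-\Theta(U)}$'' requires $\bar{S}-U/2$ to be of order $U$, whereas the paper only assumes $U<\bar{S}$. The paper's interval-based proof for Lemma~\ref{lem:hittingprob} sidesteps this by working with the two Chernoff events $F_{i1},F_{i2}$ symmetrically in $A$ and $B$ rather than first conditioning $S$ away; for the upper-tail analogue the $S$-event becomes moot (for $x\le\bar{S}$ the server queue reaches $x$ with a positive-drift walk, so only the customer-side Chernoff event is a large deviation), so the paper never needs a quantitative gap between $\bar{S}$ and $U$. Your route does need it, and it would be worth stating the assumption, or replacing the preliminary bound on $\{S\le U/2\}$ with the paper's union-of-events device so that the argument goes through under $U<\bar{S}$ alone.
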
 
Proof of Lemma \ref{lem:tail} is identical to that of Lemma \ref{lem:hittingprob} and is omitted. An important consequence of Lemma \ref{lem:tail} is a bound on the expected customer queue length, which using Little's law gives a bound on the average delay seen by the customers.
\begin{corollary} $\bbE\{N\} = {\cal O}(U)$ by choosing $\beta \ge 3$.
\end{corollary}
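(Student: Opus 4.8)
The plan is to read off the bound from the tail-integral (layer-cake) representation of the mean together with the tail estimate of Lemma~\ref{lem:tail}. Let $N$ denote a steady-state copy of $N(t)$ under algorithm $\cA$ (recall $p(t)=p^\star$ for all $t$). Since $N\ge 0$,
\begin{equation*}
\bbE\{N\} = \int_0^\infty \bbP(N > t)\, dt = \int_0^{U} \bbP(N > t)\, dt + \int_U^\infty \bbP(N > t)\, dt \le U + \int_U^\infty \bbP(N > t)\, dt,
\end{equation*}
where the first integral is at most $U$ simply because probabilities are bounded by $1$. Hence it suffices to show that the residual tail integral is $\mathcal{O}(U)$; in fact it will turn out to be $o(1)$.

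To control that residual we need more than the edge value $\bbP(N>U)=\mathcal{O}(U^{-\beta})$ given by Lemma~\ref{lem:tail}: we need decay in $t$. For this one revisits the Lyapunov/hitting-probability argument behind Lemma~\ref{lem:hittingprob} (hence Lemma~\ref{lem:tail}). Whenever $N(t)>U/2$, algorithm $\cA$ matches at rate $\mu(p^\star)+\delta$ against customer arrivals of rate $\mu(p^\star)$, so $N(t)$ sees a constant negative drift of size $\delta$ above $U/2$; the same Chernoff/Lundberg-type computation that yields Lemma~\ref{lem:hittingprob} then gives, for all $t\ge U$,
\begin{equation*}
\bbP(N > t) \le c_1\, U^{-\beta}\, e^{-c_2 \delta (t - U)},
\end{equation*}
with $c_1,c_2>0$ independent of $U$ (the prefactor $c_1 U^{-\beta}$ at $t=U$ being exactly Lemma~\ref{lem:tail}). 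The identical statement holds with $N$ replaced by the customer queue $C(t)$, since $N(t)=\min\{S(t),C(t)\}$ and the same drift governs $C(t)$ above $U/2$.

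Substituting this and using $\delta = \alpha(\log U)/U$ with $\alpha=\beta\sigma^2_C$,
\begin{equation*}
\int_U^\infty \bbP(N > t)\, dt \le c_1 U^{-\beta} \int_0^\infty e^{-c_2 \delta s}\, ds = \frac{c_1}{c_2}\,\frac{U^{-\beta}}{\delta} = \frac{c_1}{c_2\alpha}\,\frac{U^{1-\beta}}{\log U}.
\end{equation*}
For $\beta\ge 3$ (any $\beta\ge 2$ already works; the extra margin just keeps the geometric-series bookkeeping painless) the right-hand side is $o(1)$, so $\bbE\{N\}\le U + o(1) = \mathcal{O}(U)$, which is the claim. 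Running the same computation for $C(t)$ gives $\bbE\{C\}=\mathcal{O}(U)$, and Little's law then turns this into the $\mathcal{O}(U)$ bound on the expected customer delay mentioned in the surrounding text.

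The step requiring care — the main obstacle — is precisely that Lemma~\ref{lem:tail} is stated only as a point estimate at $x=U$, so to make the tail integral summable one must go back into its proof and extract the geometric decay in $x-U$ from the negative drift above $U/2$. Alternatively, one can sidestep this by invoking the hard cap $N(t)\le \bar S$ and a (mild) polynomial bound on $\bar S$ in terms of $U$, splitting $\int_U^{\bar S}\bbP(N>t)\,dt \le (\bar S - U)\,\mathcal{O}(U^{-\beta})$; but the drift route is cleaner and self-contained, and moreover yields the matching bound on $\bbE\{C\}$ needed for the delay statement.
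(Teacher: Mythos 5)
Your argument via the tail-integral (layer-cake) representation is sound, and since the paper leaves this corollary without a written proof, it is a reasonable reconstruction of what must be intended. You correctly identify the crux: Lemma~\ref{lem:tail} as stated gives only the \emph{uniform} bound $\bbP(N \ge x) = \mathcal{O}(U^{-\beta})$ over $x > U$, with no decay in $x$, so the residual integral $\int_U^\infty \bbP(N>t)\,dt$ cannot be controlled from the lemma's statement alone. Going back into the Chernoff computation of Lemma~\ref{lem:hittingprob} to extract a decay exponent proportional to $\delta(t-U)$ is the right fix, and your bookkeeping ($\int_0^\infty e^{-c_2\delta s}\,ds = 1/(c_2\delta)$, then $\delta = \alpha\log U / U$) yields a residual of order $U^{1-\beta}/\log U$, which is already $o(1)$ for $\beta \ge 2$; the paper's $\beta \ge 3$ appears to carry slack, perhaps to accommodate the cruder ``hard cap'' route you also sketch or to simplify constants. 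Note also that the tail bound you invoke is really a large-deviations (log-asymptotic) estimate from the appendix rather than a clean finite-$U$ inequality, so the constants $c_1,c_2$ should be understood as carrying sub-exponential corrections; this does not change the conclusion.

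One caution on a side claim: asserting that ``the identical statement holds with $N$ replaced by $C(t)$'' is not immediate. The matching rate in~\eqref{eq:alg} is thresholded on $N(t) = \min\{S(t),C(t)\}$, so in the regime $S(t) \le U/2 < C(t)$ the departure rate drops to $\mu(p^\star)-\delta$ and $C$ drifts \emph{upward}; one would need a separate argument that this regime has vanishing stationary weight (driven by $\lambda > \mu(p^\star)$ pushing $S$ up towards its cap) before porting the tail from $N$ to $C$. This is moot for the corollary itself, which concerns only $\bbE\{N\}$, but it is relevant to the Little's-law step in the surrounding discussion if one insists on bounding $\bbE\{C\}$ rather than $\bbE\{N\}$.
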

Lemma~\ref{lem:tail} helps us in bounding the expected waiting time $\bbE\{W\}$ seen by any customer using Little's Law. Recall that with algorithm $\cA$, the customer arrival rate is constant $\mu(p^\star)$, and the system is stable. Thus, the expected customer departure rate is also $\mu(p^\star)$. Thus, using Little's Law, \begin{equation}\label{eq:delay}
\bbE\{W\} =  \bbE\{N\}/\mu(p^\star) = {\cal O}\left(\frac{U}{\mu(p^\star)}\right).
\end{equation}
Thus, we have proved the following lemma.
\begin{lemma}\label{lem:delay}
  With algorithm $\cA$, the expected waiting time $\bbE\{W\}$ seen by any customer is 
  $\bbE\{W\} ={\cal O}\left(\frac{U}{\mu(p^\star)}\right).$
\end{lemma}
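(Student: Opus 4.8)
\emph{Proposal.} The plan is to obtain the delay bound as an immediate consequence of Little's law, combining the steady-state queue-length estimate already available with the customer departure rate induced by algorithm~$\cA$. Concretely, I would isolate two ingredients: (i) the long-run rate at which customers leave the system, and (ii) a bound on the expected number of customers present in steady state, and then divide.

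For (i): in the regime under consideration ($\lambda > \mu(p^\star)$), algorithm~$\cA$ posts the constant price $p(t)=p^\star$, so $B(t)$ is i.i.d.\ with mean $\mu(p^\star)$; thus the customer arrival rate is exactly $\mu(p^\star)$. We have already noted, right after the description of~$\cA$, that because $\lambda > \mu(p^\star)$ the customer queue is stable, so the constraint in~\eqref{eq:profit1} holds. A stable queue fed at a constant rate clears work at the same long-run rate, so customers depart at rate $\mu(p^\star)$.

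For (ii): the corollary following Lemma~\ref{lem:tail} gives $\bbE\{N\} = {\cal O}(U)$ once $\beta \ge 3$, where $N$ is the steady-state value of $N(t) = \min\{S(t),C(t)\}$. I would then argue that the steady-state expected number of waiting customers $\bbE\{C\}$ is of the same order ${\cal O}(U)$: since $\lambda>\mu(p^\star)$, servers accumulate faster than they are consumed, so outside a rare event the server queue is near its cap ${\bar S}>U$, on which event $N(t)=C(t)$; on the complementary rare event $\bbE\{C;\,\text{rare}\}$ is controlled by the very hitting-probability tail estimates underlying Lemma~\ref{lem:tail} (Lemma~\ref{lem:hittingprob}). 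Taking $N$ itself as the relevant occupancy, as in the proof sketch of Theorem~\ref{thm:lbq}, makes this step vacuous. Either way, Little's law yields $\bbE\{W\} = \bbE\{C\}/\mu(p^\star) = {\cal O}\!\left(U/\mu(p^\star)\right)$, as stated in~\eqref{eq:delay}.

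The only step requiring any care is (ii) — cleanly tying $\bbE\{C\}$ to $\bbE\{N\}$, i.e., confirming that the server side is almost never the binding term of the minimum — and even this is lightweight given the exponential-type tail bounds already in hand; the remainder is a textbook invocation of Little's law.
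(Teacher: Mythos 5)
Your proposal reproduces the paper's own argument: the paper proves Lemma~\ref{lem:delay} in one line by citing the corollary to Lemma~\ref{lem:tail} ($\bbE\{N\} = {\cal O}(U)$), noting the customer departure rate is $\mu(p^\star)$ under stability, and invoking Little's law as $\bbE\{W\} = \bbE\{N\}/\mu(p^\star)$. Your steps (i) and (ii) are exactly this decomposition, so the route is the same.

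Where you go beyond the paper is in flagging that Little's law for the customer queue should really be stated with $\bbE\{C\}$, not $\bbE\{N\} = \bbE\{\min\{S,C\}\}$, and $N \le C$ always, so the paper's $\bbE\{N\}$ is \emph{a priori} an underestimate of the relevant occupancy. This is a genuine (if small) gap in the paper's write-up that you have correctly identified. Your proposed repair --- that since $\lambda > \mu(p^\star) + \delta \ge |M_\cA(t)|$ the server queue drifts up to its cap $\bar S > U$ and stays there outside a rare event, on which $N = C$ --- is the right idea, but note that it is sketched rather than proved: the tail bounds actually established (Lemma~\ref{lem:hittingprob} / Lemma~\ref{lem:tail}) control $N$, not $(C-S)^+$ directly, so bounding $\bbE\{(C-S)^+\}$ requires a further argument about the marginal of $S$ (e.g., a geometric tail for $\bar S - S$) that neither you nor the paper actually supplies. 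So your proposal is at least as rigorous as the paper's and more self-aware of the subtlety, but if you want the proof to be airtight you should carry out the $\bbE\{C\} - \bbE\{N\}$ estimate rather than asserting it is lightweight.
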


{\it Discussion:} The upshot of Theorem \ref{thm:lbq} is that near optimal profit can be obtained with static pricing, and advantage of dynamic pricing is arbitrarily small. Similar conclusions have been derived in earlier papers e.g., \cite{banerjee2015pricing}, with an objective function that is a sum of the platform profit and the expected delay seen by the servers, however, these results were shown in limiting regimes, such as scaling the number of customers and servers, and considering a fluid limit. Moreover, Lemma \ref{lem:delay} shows that the static pricing policy that achieves near optimal profit has bounded expected delay for customers, as a function of the parameter $U$ that also controls the sub-optimality gap. 
Thus, threshold level $U$ provides a tradeoff between the sub-optimality gap from Theorem \ref{thm:lbq} and the expected delay seen by a customer \eqref{eq:delay}, and can be appropriately chosen given QoS requirements.

In Figs. \ref{fig:outage} and \ref{fig:delay}, we illustrate the tradeoff between the sub-optimality gap and the expected customer delay as a function of $U$,
when both the customer and the server arrival distributions are Poisson. 
Recall that the sub-optimality gap is essentially controlled by the outage probability. Thus, we plot the outage probability and expected customer queue length (that controls the expected delay), as a function of $U$ for different values of $\mu(p^\star)$ and $\Delta = \lambda - \mu(p^\star)=0.1$. 
This way we  avoid making specific choices of $V$ and $\mu$, while still capturing the quantities of interest. We observe that the simulated performance remains unchanged for higher values of $\Delta$, since the minimum of the two rates $\lambda$, $\mu(p^\star)$, controls the performance. Hence to avoid cluttered plots, we only illustrate the $\Delta=0.1$ case. 
For simulations, we use $\beta = 2$, $\sigma_C^2 = 2$ that completely defines algorithm $\cA$'s choice of $\delta$.
As promised by theory, the outage probability falls off as power-law with respect to $U$, while the expected delay is close to $U/2$.

\section{Conclusions}
In this paper, we have considered simple pricing strategies for two
sided queue matching problems, that have typically been either
analysed in large system limits, or for which optimal but non-closed
form policies are known. We considered both the loss and the queueing
systems, which are relevant for practical applications. For the loss
system, we showed an important structural result that the optimal
policy is of the bang bang type when the customer arrival rate is a
linear function of the price.
For the queueing model, we propose a simple static pricing strategy,
and a bi-modal matching decision that is shown to be near-optimal,
together with a bound on the expected delay seen by the customers.


\bibliographystyle{IEEEtran}
\bibliography{Refs}

\onecolumn
\section{Appendices}

\subsection{Proof of Lemma~\ref{lemma:relaxed_obj}}
\begin{proof}
  Under Assumption~\ref{ass:linear}, we rewrite the first term
  of~$C_{\text{rel}}$ as follows.
  \begin{align*}
    \sum_{i=0}^{\infty} \pi_{i} p_{i} &= \frac{1}{\alpha} \sum_{i=0}^{\infty} \pi_i (\beta - \mu_i) \\
                                      &= \frac{\beta - \pi_0 \mu_0 - \sum_{i=1}^{\infty} \pi_i \mu_i }{\alpha} \\
                                      &\stackrel{(a)}{=} \frac{\beta - \pi_0 \mu_0 - \sum_{i=1}^{\infty} \pi_{i-1} \lambda }{\alpha} \\
                                      &= \frac{\beta - \pi_0 \mu_0 - \lambda}{\alpha}.    
  \end{align*}
  Step$(a)$ follows from the reversibility of the birth death Markov
  chain, which gives $\pi_{i-1} \lambda = \pi_i \mu_i$ for $i \geq 1.$
\end{proof}

\subsection{Proof of Lemma~\ref{lemma:bangbang}}
This section is devoted to the proof of Lemma~\ref{lemma:bangbang},
which shows that any feasible policy that does not have the above
bang-bang structure can be improved upon by making a perturbation
`towards this structure'. We start with the following lemma.
\begin{lemma}
  \label{lemma:monotoninity_rho}
  Assuming positive recurrence, for any $i \geq 1,$ $\pi(0)$ is a
  strictly decreasing function of $\rho_i$ and
  $\sum_{i=0}^{\infty} i \pi_i$ is a strictly increasing function of
  $\rho_i.$
\end{lemma}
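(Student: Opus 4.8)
The plan is to fix an index $k \ge 1$, regard $\rho_k$ as the only variable (holding $\rho_j$, $j \ne k$, fixed at admissible values), and exploit the particularly simple way in which the unnormalized weights $h_i = \prod_{j=1}^i \rho_j$ depend on $\rho_k$: the weight $h_i$ is independent of $\rho_k$ for $i < k$, while $h_i$ is \emph{directly proportional} to $\rho_k$ for $i \ge k$. Hence the normalizing constant splits as $S := \sum_{j \ge 0} h_j = A + \rho_k B$, where $A := \sum_{j=0}^{k-1} h_j > 0$ is independent of $\rho_k$ and $\rho_k B := \sum_{j \ge k} h_j$, so that $B > 0$ is also independent of $\rho_k$. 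Note that raising $\rho_k$ to any finite value keeps $S$ finite (since $B \le S/\rho_k < \infty$ for the original configuration), so varying $\rho_k$ within this range stays within the positive-recurrent regime and the statement is meaningful.

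For $\pi_0$ the claim is then immediate: since $h_0 = 1$, we have $\pi_0 = 1/(A + \rho_k B)$, which is strictly decreasing in $\rho_k$ because $B > 0$. For the first moment (which we may assume finite, this being the only case of interest given the constraint $\sum_i i \pi_i \le C$), write analogously $\sum_{i \ge 0} i h_i = C_0 + \rho_k D$, where $C_0 := \sum_{i=0}^{k-1} i h_i \ge 0$ is independent of $\rho_k$ and $\rho_k D := \sum_{i \ge k} i h_i$, so $D > 0$ is independent of $\rho_k$. Then
$$\sum_{i \ge 0} i \pi_i = \frac{C_0 + \rho_k D}{A + \rho_k B},$$
whose derivative in $\rho_k$ is $(DA - B C_0)/(A + \rho_k B)^2$, so it remains only to show $DA > B C_0$, i.e.\ $D/B > C_0/A$.

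The one genuine point to check — and the crux of the argument — is that $D/B = \big(\sum_{i \ge k} i h_i\big)/\big(\sum_{i \ge k} h_i\big)$ is a convex combination of integers $\ge k$, hence $D/B \ge k$, whereas $C_0/A = \big(\sum_{i=0}^{k-1} i h_i\big)/\big(\sum_{i=0}^{k-1} h_i\big)$ is a convex combination of integers in $\{0, \dots, k-1\}$, hence $C_0/A \le k-1$. Therefore $D/B \ge k > k-1 \ge C_0/A$, giving the desired strict inequality. I do not expect a real obstacle; the only mild care needed is (i) recording that increasing $\rho_k$ preserves positive recurrence so the monotone comparison is legitimate, and (ii) the degenerate case $k = 1$, where $C_0 = 0$ and $C_0/A = 0 < 1 \le D/B$, so the argument still goes through.
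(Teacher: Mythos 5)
Your proof is correct. The paper itself omits the argument entirely, declaring the lemma ``trivial,'' so there is no paper proof to compare against; your write-up fills that gap cleanly. The key observation --- that for fixed $k$ the weights $h_i$ with $i < k$ are independent of $\rho_k$ while those with $i \ge k$ are exactly proportional to $\rho_k$ --- reduces both quantities to ratios of affine functions of $\rho_k$, and the sign of the derivative of $\sum_i i\pi_i$ reduces to comparing the conditional mean index over $\{k, k+1, \dots\}$ (which is $\ge k$) with the conditional mean index over $\{0,\dots,k-1\}$ (which is $\le k-1$). Your two side remarks are also the right ones to make: positive recurrence is preserved under any finite change of a single $\rho_k$ since $S = A + \rho_k B$ stays finite, and the monotonicity statement for the first moment is only meaningful (and only needed, given the constraint $\sum_i i\pi_i \le C$) in the regime where $D = \rho_k^{-1}\sum_{i\ge k} i h_i$ is finite. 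This is precisely the clean elementary argument one would want in place of the paper's omission.
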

The proof of Lemma~\ref{lemma:monotoninity_rho} is trivial; we omit
the proof.

\begin{proof}[Proof of Lemma~\ref{lemma:bangbang}]
  If $\bar{\rho} < 1$ and $\frac{\bar{\rho}}{1-\bar{\rho}} \leq C,$
  then the optimal solution, in light of
  Lemma~\ref{lemma:monotoninity_rho}, is: $\rho_i = \bar{\rho}$ for
  all $i$ (i.e., $\ell^* = \infty$).

  Thus, in the remainder of this proof, we assume that either (i)
  $\bar{\rho} \geq 1,$ or (ii) $\bar{\rho} < 1$ and
  $\frac{\bar{\rho}}{1-\bar{\rho}} > C.$ In this case, in light of
  Lemma~\ref{lemma:monotoninity_rho}, moment constraint
  in~\eqref{eq:bang_bang_obj} must hold with equality at the optimum.

  Let
  \begin{align*}
    f(\rho) &:= \frac{1}{\pi_0} = \sum_{j=0}^{\infty} h_j,\\
    m(\rho) &:= \sum_{i=0}^{\infty} (i-C) h_i. 
  \end{align*}
  Denoting the optimal solution of~\eqref{eq:bang_bang_obj} by
  $\rho^*,$ note that $m(\rho^*) = 0.$
  
  The statement of the lemma now follows from the following claim.

  {\bf Claim~1:} Consider $\rho = (\rho_i,\ i\geq 1)$ such that
  \begin{itemize}
  \item  $m(\rho) = 0,$ and
  \item there exists $i \geq 1$ satisfying $\rho_i < \bar{\rho},$
    $\rho_{i+1} > \underline{\rho}.$
  \end{itemize}
  Then $\rho$ is not optimal. Specifically, one can construct
  $\tilde{\rho}$, where $\tilde{\rho}_j = \rho_j$ for $j \ne i, i+1,$
  and for $\epsilon, \delta > 0,$
  $\tilde{\rho}_i = \rho_i + \epsilon,$
  $\tilde{\rho}_{i+1} = \rho_{i-1} - \delta,$ such that
  $m(\tilde{\rho}) = 0,$ $f(\tilde{\rho}) > f(\rho).$

  {\bf Proof of Claim~1:} First, consider the difference
  $f(\tilde{\rho}) - f(\rho).$
  \begin{eqnarray}
    f(\tilde{\rho}) - f(\rho) &=&  \sum_j \tilde{h}_j - \sum_j h_j = h_{i - 1}
   \left ( ( \tilde{\rho}_i - \rho_i) + \sum_{j \geq i+1}^{\infty} \left
   (\prod_{k = i}^j \tilde{\rho}_k - \prod_{k = i}^j  \rho_k \right ) \right )  \nonumber\\
     &=& h_{i - 1} \bigg( (\tilde{\rho_i} - \rho_i) + (\tilde{\rho_i}\tilde{\rho}_{i+1} - \rho_i \rho_{i+1})
         +  \sum_{j \geq i+2}^{\infty} (\tilde{\rho}_i\tilde{\rho}_{i+1} - \rho_i \rho_{i+1})  \prod_{k = i+2}^j  \rho_k \bigg) \nonumber \\
     &=& h_{i - 1}\bigg( (\tilde{\rho}_i - \rho_i) +  (\tilde{\rho}_i\tilde{\rho}_{i+1} - \rho_i \rho_{i+1})
       \bigg(1 + \sum_{j \geq i+2}^{\infty} \prod_{k = i+2}^j  \rho_k \bigg) \bigg). \label{eq:bangbang_1}
\end{eqnarray}

Next, consider the difference $m(\tilde{\rho}) - m(\rho).$
\begin{eqnarray}
\label{Eqn_diff_constraints}
m(\tilde{\rho}) - m(\rho)  &=& \sum_j \tilde{h}_j(j - C) \  - \ \sum_j h_j(j - C)= \sum_j (\tilde{h}_j - h_j) (j - C)\nonumber  \\
&=& h_{i-1}\bigg( (\tilde{\rho}_i - \rho_i)(i - C) +    (\tilde{\rho}_i\tilde{\rho}_{i+1} - \rho_i \rho_{i+1}) \bigg( (i+1-C) + \sum_{j \geq i+2} (j - C) \prod_{k = i+2}^{j} \rho_k    \bigg) \bigg). \nonumber
\end{eqnarray}
Setting the above difference to zero gives us the following condition
that relates the perturbations $\epsilon$ and $\delta.$
$$
\tilde{\rho}_i\tilde{\rho}_{i+1} - \rho_i \rho_{i+1} = \frac{-(\tilde{\rho}_i - \rho_i)(i - C)}{(i+1-C) + \sum_{j \geq i+2} (j - C) \prod_{k = i+2}^{j} \rho_k}  .
$$
Substituting the above into \eqref{eq:bangbang_1}, we have
\begin{eqnarray*}
	f(\tilde{\rho}) - f(\rho)   
	&=& h_{i - 1} \bigg( (\tilde{\rho_i} - \rho_i) - \frac{(\tilde{\rho}_i - \rho_i)(i - C)\big(1 + \sum_{j \geq i+2}^{\infty} \prod_{k = i+2}^j  \rho_k \big)}{(i+1-C) + \sum_{j \geq i+2} (j - C) \prod_{k = i+2}^{j} \rho_k}  \bigg) \\ 
	&&= \ \frac{h_{i - 1} (\tilde{\rho_i} - \rho_i)}{(i+1-C) + \sum_{j \geq i+2} (j - C) \prod_{k = i+2}^{j} \rho_k} \bigg( 1 + \sum_{j \geq i+2} (j - i) \prod_{k = i+2}^{j} \rho_k  \bigg ).
\end{eqnarray*}

It thus suffices to show that
\begin{equation}
  \label{eq:bangbang_2}
  (i+1-C) + \sum_{j \geq i+2} (j - C) \prod_{k = i+2}^{j} \rho_k > 0.
\end{equation}
This is trivial if $i+1 \geq C.$ Suppose then that $i+1 < C.$ In
this case, note that
$$m(\rho) = \sum_{j = 0} ^i (j-C) h_j + \sum_{j = i+1}^{\infty} (j-C) h_j = 0.$$
Since the first term above is negative, it follows that the second is
positive, i.e.,
$$\sum_{j = i+1}^{\infty} (j-C) h_j = h_{i+1} \left((i+1-C) + \sum_{j
    \geq i+2} (j - C) \prod_{k = i+2}^{j} \rho_k \right) >
0,$$
which implies \eqref{eq:bangbang_2}.
\end{proof}

\subsection{Proofs of results in Section~\ref{sec:universal_bounds_loss}}

\begin{proof}[Proof of Lemma~\ref{lemma:ub1}]
  The proof follows by considering only the revenue component of the
  objective, and via an application of Jensen's inequality.
  $$C_{\text{rel}} \leq \sum_{i = 0}^{\infty} \pi_i g^{-1}(\mu_i) \leq
  g^{-1}\left(\sum_{i = 0}^{\infty} \pi_i \mu_i \right) \leq
  g^{-1}(\lambda).$$ In the last step above, we use
  $\lambda = \sum_{i = 1}^{\infty} \pi_i \mu_i \leq \sum_{i =
    0}^{\infty} \pi_i \mu_i.$
\end{proof}

\begin{proof}[Proof of Lemma~\ref{lemma:ub2}]
  Consider a tagged server arriving into the system in steady
  state. If the server is matched at price~$p,$ its mean sojourn time
  is at least $\frac{1}{g(p)}.$ Optimizing with respect to~$p$ yields
  the upper bound. Since we are ignoring congestion from other
  (waiting) servers, this bound is expected to be tight in \emph{light
    traffic}.
\end{proof}

\subsection{Proof of Lemma~\ref{lem:ub}}\label{app:ub}
\begin{proof}
  Consider stationary Markov policies which apply a finite collection
  of prices $\{p_k,\ k \in \K\},$ such that the long run fraction of
  time price $p_k$ is used by the platform is $\theta_k > 0.$ Since the customer
  queue must be stable, we have
  $$\lambda \geq \sum_{k \in \K} \theta_k \mu(p_k).$$

Let $N_{k,t}$ denote the number of times price~$p_k$ has been
  used until time~$t,$ and $B_{k,i}$ is the random number of customer arrivals that take place  the $i$th time price $p_k$ is used. Clearly,
  $\{B_{k,i}\}_{i \geq 1}$ is an i.i.d. sequence with mean $\mu(p_k).$
  Using this, we bound the objective as follows.
  \begin{align*}
    \frac{1}{T} \sum_{t = 1}^T V(\sum_{i \in M(t)} p_i) &\leq V\left(\frac{\sum_{t = 1}^T \sum_{i \in M(t)} p_i}{T} \right) \\
    &=V\left(\frac{\sum_{t = 1}^T p(t) B(t) - \sum_{j \in C(t+1)} p_j  }{T} \right) \\
                                                        &=V\left(\frac{\sum_{k \in \K} p_k \sum_{i = 1}^ {N_{k,t}} B_{k,i}   - \sum_{j \in C(t+1)} p_j  }{T} \right)\\
                                                        &= V\left(\sum_{k \in \K} p_k \frac{\sum_{i = 1}^ {N_{k,t}} B_{k,i}}{N_{k,t}} \frac{N_{k,t}}{T} - \frac{\sum_{j \in C(t+1)} p_j}{T}     \right)
  \end{align*}
%
  Letting $T \ra \infty,$ we get that almost surely,
  \begin{align*}
    \limsup_{T \ra \infty} \frac{1}{T} \sum_{t = 1}^T V(\sum_{i \in M(t)} p_i) & \leq V\left(\sum_{k \in K} p_k \mu(p_k) \theta_k\right)\\
    & \leq V\left(p^\star \mu(p^\star) \right).
  \end{align*}
  The last step above uses the concavity of the function~$p \mu(p).$  
\end{proof}

\subsection{Proof of Theorem \ref{thm:lbq}}
\begin{proof}
With algorithm $\cA$  \eqref{eq:alg}, since a constant price $p(t) = p^\star$ is charged to all the customers, we rewrite 
the profit \eqref{eq:profit} as 
  \begin{equation}\label{eq:profitA}
  \bV_\cA= 
  \lim_{T\rightarrow \infty}\frac{1}{T}\sum_{t=1}^TV(p^\star
  |M_\cA(t)|).
\end{equation}
  Note that this rewriting of the profit \eqref{eq:profit} is possible only for algorithms that charge a constant price across time, however, $ \bV_\opt$ allows arbitrary pricing $p(t)$.
  
  Next, we show that
  $$\lim_{T\rightarrow \infty}\frac{1}{T}\sum_{t=1}^TV(p^\star
  |M_\cA(t)|) \ge V(p^\star \mu(p^\star)) - O\left(\left(\frac{\log
        U}{U}\right)^2\right).$$

Let $V^+$  and $V^{-}$ be the profit made by the algorithm $\cA$ when $N \geq U/2$   by using $|M_\cA(t)| = \mu(p^\star) + \delta$,  and when $N <  U/2$ (assuming $N(t) > \mu(p^\star) - \delta$), with $|M_\cA(t)| =\mu(p^\star) - \delta$, respectively.
The Taylor series expansion of the profit ($V^+$ and $V^-$) about $p^\star\mu(p^\star)$ can be written as 
\begin{align}
\begin{split}
\label{j_plus}
V^{+}= V(p^\star(\mu(p^\star) + \delta)) &= V\left(p^\star\mu(p^\star)\right) + V^{(1)}\left(p^\star\mu(p^\star)\right)\delta' + V^{(2)}\left(p^\star\mu(p^\star)\right)\delta'^{2} + o(\delta'^{2}),
\end{split} \\
\begin{split}
\label{j_minus}
V^{-}=V(p^\star(\mu(p^\star) - \delta)) &=  V\left(p^\star\mu(p^\star)\right) - V^{(1)}\left(p^\star\mu(p^\star)\right)\delta' + V^{(2)}\left(p^\star\mu(p^\star)\right)\delta'^{2} + o(\delta'^{2}).
\end{split}
\end{align}
where $V^{(i)}$ is the $i^{th}$ derivative of $V$ and $\delta'= p^\star \delta$.

Define $\rho^{+}$ as the fraction of time that $N > \frac{U}{2}$, and $\rho^{-}$ as the fraction of time that $N\le  \frac{U}{2}$. Then the profit \eqref{eq:profitA} can be written as
\begin{align}
\nonumber
\bV_\cA &\stackrel{(a)}= \rho^{+}V^{+} + \left( \rho^{-} - P_{\text{outage}} \right)V^{-}, \\
\label{upper-bound-large-B}
\begin{split}
&\stackrel{(b)}= V(p^\star \mu(p^\star)) + V^{(1)}(p^\star \mu(p^\star))(\rho^{+}\delta' - \rho^{-}\delta') + \Theta\left( \frac{(\log U)^2}{U^2}\right),
\end{split}
\end{align}
  where (a) follows from the fact that at most $|M_\cA(t)| = \mu(p^\star) - 
    \delta$ when $N <  U/2$ and $V^{-}$ is overestimated, (b) follows from \eqref{j_plus} and \eqref{j_minus} and Lemma \ref{lem:hittingprob}, i.e.,  $P_{\text{outage}} = \Theta\left(U^{-\beta}\right)$, with $\delta = \Theta\left(\frac{\log U}{U}\right)$,  $\rho^{+} + \rho^{-} = 1$, and $V^{(2)}(\mu(p^\star))$ is constant.

By applying the conservation of customer arrivals, i.e., the departed customers  is equal to the arrived customers, we have
 $\rho^{+} \left(\mu(p^\star) + \delta \right) + (\rho^{-} - P_{\text{outage}})  \left(\mu(p^\star) - \delta \right) 
          = \mu(p^\star)$, following \eqref{eq:alg}.

Rearranging this, and substituting the expression for $P_{\text{outage}}$ from Lemma \ref{lem:hittingprob}, we get  \begin{equation}
\label{conservation-of-packets}
\rho^{+}\delta - \rho^{-}\delta = \Theta\left(U^{-\beta}\right).
\end{equation}
Using \eqref{conservation-of-packets}, from \eqref{upper-bound-large-B}, 
\begin{align}
\nonumber
\bV_\cA &\stackrel{(a)} \geq V(p^\star \mu(p^\star)) + \Theta\left(U^{-\beta}\right) + \Theta\left(\frac{(\log U)^2}{U^2}\right). 
\end{align}
From Lemma \ref{lem:ub}, $\bV_\opt \le V(p^\star \mu(p^\star))$. Hence, we get that 
$$\bV_\opt - V_\cA \le \Theta\left(\frac{(\log U)^2}{U^2}\right), $$
since $\beta \ge 2$. 
\end{proof}

Next, provide the remaining proof of Lemma \ref{lem:hittingprob}.
\begin{proof}[Proof of Lemma \ref{lem:hittingprob}]
Let the event $N <  \mu(p^\star) - \delta$ be called as the {\bf outage} event. Since we are letting $U$ large, the outage event is similar to $N=0$. Hence, we will try to bound the outage probability $\bbP(N =0) = \lim_{T \rightarrow \infty}\frac{1}{T} \sum_{t=1}^T \bbP(N(t) =0)$.

We will break the time into intervals $\cI$, where each interval has $\frac{U}{2L}>1$ time slots, and $L$ is some constant to be chosen later. Without loss of generality, we let $\frac{U}{2L}$ to be an integer. We call slot $t \in \cI_i$ if slot $t$ falls in  the interval $\cI_i = [i\frac{U}{2L}, \ \ (i+1)\frac{U}{2L}]$. Let the system be in operation since time $-\infty$.

Then {\it event} $E_i$ is defined as $N(0)=0$ and $\cI_{-i}$ be the last interval during which $N(t) \ge U/2$, i.e., 
$t \in \cI_{-i}$.

The basic idea in defining $E_i$, is that throughout time consisting of $i$ intervals, starting from interval
$\cI_{-i}$ till interval $\cI_0$, the algorithm \eqref{eq:alg} will be using $|M_\cA(t)| = \mu(p^\star) - \delta$ since $N(t)\ge U/2$ throughout, while the arrival process $N(t)$ has increments with mean at least $\mu(p^\star)$. Thus, there is a positive bias to the process $N(t)$,  
during  interval
$\cI_{-i}$ till interval $\cI_0$, and hence the probability $P(E_i)$ is expected to be exponentially small with respect to $U$. 

Under these definitions, 
$$\bbP(N =0) = \sum_{i=0}^\infty P(E_i).$$
 We define two
events that only depend on server arrivals $A(t)$ and customer arrivals $B(t)$ in time slot $t$, respectively.  
 Let 
$$F_{i1}: \sum_{t = -i\frac{U}{2L}}^0 \left(|M_\cA(t)| - B(t) \right) > U/2,$$
and
$$F_{i2} : \sum_{t = -i\frac{U}{2L}}^0 \left(|M_\cA(t)| - A(t) \right) >
U/2.$$

We claim that event $E_i$ implies that at least one of $F_{i1}$ or $F_{i2}$, since the arrivals for either $A(t)$ or $B(t)$ are insufficient
compared to $|M_\cA(t)|$ for event $F_{i1}$ or $F_{i2}$ to happen. 

To prove the claim consider the following two cases. 
The condition we know is that at time $t$, $N(t) =  \min\{S(t), C(t)\} = U/2$. 
Case I : $S(t) = U/2$ and $C(t) \ge U/2$. 
Therefore, if both $F_{i1}$ and $F_{i2}$ are false, then $N(0)>0$, and hence $E_{i}$ is also false. 
Identical argument holds when 
Case II : $S(t) \ge U/2$ and $C(t) = U/2$. 

Thus, we have that $E_i $ implies $F_{i1} \cup F_{i2}$, and therefore $P(E_i) \le P(F_{i1}) + P(F_{i2})$. 

Moreover, since $B(t)$
(with arrival rate $\lambda > \mu(p^\star))$ stochastically
dominates $A(t)$ (with arrival rate $\mu(p^\star)$), we have
$P(F_{i1})\le P(F_{i2})$. Hence, we have $$P(E_i) \le 2P(F_{i2}).$$
B(t)

Next, we upper bound $P(F_{i2})$ using 
Chernoff's bound as follows, where the proof is similar to \cite{srivastava2012basic}, and is provided for completeness..

We begin by noting that $P(F_{i1}) = P\left(\sum_{t = -i\frac{U}{2L}}^0 \left(|M_\cA(t)|-B(t) \right) > U/2\right)$ and 

$ P\left(\sum_{t = -i\frac{U}{2L}}^0 \left(|M_\cA(t)|-B(t) \right) > U/2\right)$
\begin{align}
 \stackrel{(a)}\le&  
  \bbE\left\{\exp\left( \theta_i \sum_{t = -i\frac{U}{2L}}^0 \left(|M_\cA(t)|-B(t) \right) \right) \right\} \exp\left(-\theta_i U/2\right)\\
  \stackrel{(b)}\le& \bbE\left\{\exp\left( - \theta_i \sum_{t = -i\frac{U}{2L}}^0  B(t) \right) \right\} 
  \exp\left(\theta_i \frac{U}{2L} \left(\mu(p^\star) - \delta\right) \right)\exp\left(-\theta_i U/2\right), \\ \label{eq:ub1}
  \stackrel{(b)}=&      \exp \left( -\frac{U}{2} \left(\theta_i \left(1- \frac{i+1}{L}(\mu(p^\star) - \delta)\right) - \frac{i+1}{L}\cM_C(-\theta_i) + \epsilon_i(U,\theta_i)\right)\right),
  \end{align}
where $(a)$ follows from Chernoff's bound for $\theta_i\ge 0$, $(b)$ follows since $|M_\cA(t)| \le \mu(p^\star) - \delta$ for all slots $t $ from $-i\frac{U}{2L}$ till $0$, where for $(c)$ we define $\cM_C(s) = \lim_{T\rightarrow \infty}\frac{1}{T}\bbE\left\{\exp\left(s \sum_{t=1}^T B(t)\right)\right\}$ as the semi-invariant log-moment generating function of $B(t)$ the number of customer arrivals in slot $t$, and $\epsilon_i(U,\theta_i) \rightarrow 0$ as $U \rightarrow \infty$. Note that $\epsilon_i(U,\theta_i)$ appears since the limit of the summation in $(a)$ is from $t = -i\frac{U}{2L}$ to $0$, and not from $t=0$ to $\infty$ and no limits are taken.

From \eqref{eq:ub1}, let $f_i(\theta) = \left(\theta \left(1- \frac{i+1}{L}(\mu(p^\star) - \delta)\right) - \frac{i+1}{L}\cM_C(-\theta)\right),$
and 
$$
\theta_i^* = \arg \max_{\theta} f_i(\theta).$$

Note that $\bbE\{B(t)\} = \mu(p^\star) > \mu(p^\star)  - \delta$. Hence the function $\theta(\mu(p^\star)- \delta) + \cM_C(-\theta)$ has a negative slope at $\theta=0$. Hence, there exists a $\theta'$ such that $\theta'(\mu(p^\star)- \delta) + \cM_C(\theta') < 0$. Moreover, we get that for such a $\theta'$, there exists $\sfi$ such that for $i \ge \sfi $ and $\gamma>0$, such that 
\begin{equation}\label{eq:dum1}
f_i(\theta') \ge f_\opt + \sfi \gamma,
\end{equation}
where $$f_\opt = \inf_{i\ge 0} \sup_{\theta\ge 0} f_i(\theta).$$
Recall that 
\begin{equation}\label{eq:dum2}
\bbP(N =0) = \sum_{i=0}^\infty P(E_i) \le 2 \sum_{i=0}^\infty P(F_{i2}).
\end{equation}
Writing a partial sum $P(F_{i2})$ for $i\ge \sfi$ from \eqref{eq:ub1}, 
\begin{align}\nn\sum_{i\ge \sfi}P(F_{i2}) & \le \sum_{i\ge \sfi}\exp\left(-\frac{U}{2} \left(\theta_i \left(1- \frac{i+1}{L}(\mu(p^\star) - \delta)\right) - \frac{i+1}{L}\cM_C(\theta_i)+ \epsilon_i(U,\theta_i)\right)\right), \\
& \le \sum_{i\ge \sfi}\exp\left(-\frac{U}{2} \left(f_i(\theta')+ \epsilon_i(U,\theta')\right)\right), \\
& \stackrel{(a)}\le \sum_{i\ge \sfi}\exp\left(-\frac{U}{2} \left(f_\opt + \sfi \gamma + \inf_{i\ge \sfi}\epsilon_i(U,\theta')\right)\right), \\
\label{eq:dummy21} 
& = \exp\left(-f_\opt \frac{U}{2L}\right)
\frac{\exp\left(-\frac{U}{2} \left((\sfi+1)\gamma + \inf_{i>\sfi}\epsilon_i(U,\theta')\right)\right)}{1-\exp\left(-\gamma\frac{U}{2}\right)},
\end{align}
where $(a)$ follows from  \eqref{eq:dum1}. 
Thus, $\sup_{U \rightarrow \infty}\frac{2}{U}\log (\sum_{i\ge \sfi}P(F_{i1})) = 0$.

The sum of $P(F_{i2})$ for the first $\sfi-1$ terms is at most
\begin{align}\nn\sum_{i< \sfi}P(F_{i2})& \le \sum_{i < \sfi}\exp\left(-\frac{U}{2} \left(\theta_i \left(1- \frac{i+1}{L}(\mu(p^\star) - \delta)\right) - \frac{i+1}{L}\cM_C(\theta_i)+ \epsilon_i(U,\theta_i)\right)\right), \\
\label{eq:dummy22}
& \le \sum_{i\le \sfi}\exp\left(-f_\opt \frac{U}{2}\right)\left((\sfi+1)\min_{0\le i < \sfi}\exp( \epsilon_i(U,\theta_i^*)\right).
\end{align}
Combining, \eqref{eq:dummy21} and  \eqref{eq:dummy22}, $$\sup_{U \rightarrow \infty}\frac{2}{U}\log (\sum_{i=0}^\infty P(F_{i2})) = -f_\opt.$$

Thus, from \eqref{eq:dum2}
\begin{equation}\label{eq:dum3}
\sup_{U \rightarrow \infty}\frac{2}{U}\log\bbP(N =0) \le -f_\opt.
\end{equation}
This is true for all values of $L$, thus we let $L \rightarrow \infty$, and tighten the bound \eqref{eq:dum3} as follows by using the definition of $f_\opt$.
\begin{align}\label{}
  \sup_{U \rightarrow \infty}\frac{2}{U}\log\bbP(N =0) \le -f_\opt=&-  \inf_{i\ge 0} \sup_{\theta\ge 0} f_i(\theta), \\
  =& \inf_{i\ge 0} \sup_{\theta\ge 0}  \left(\theta \left(1- \frac{i}{L}(\mu(p^\star) - \delta)\right) - \frac{i}{L}\cM_C(-\theta)\right), \\
  =&  \inf_{T\ge 0} \sup_{\theta\ge 0}  \left(\theta \left(1- T(\mu(p^\star) - \delta)\right) - T\cM_C(-\theta)\right), \\
=&  \inf_{T\ge 0} \sup_{\theta\ge 0}  T \left(-\theta \left(-\frac{1}{T} + (\mu(p^\star) - \delta)\right) - \cM_C(-\theta)\right).
\end{align}
This infimum and supremum is achieved at $T=\tau^*$ and $\theta = T$, where $\tau^*$ is such that 
$\cM_C(-\tau^*) = \tau^*(\mu(p^\star)-\delta)$.

Hence, we get that $$
  \sup_{U \rightarrow \infty}\frac{2}{U}\log\bbP(N =0) \le - \tau^*.$$
  
 Rewriting $\tau^*$ as the root of $g(\tau) = \cM_C(-\tau^*) - \tau^*(\mu(p^\star)-\delta)$, 
Lemma \ref{lem:exp} implies that $\frac{d\tau^*}{d\delta} = -\frac{2}{\sigma^2_C} + o(\delta)$. Thus, we get $\tau^* = \frac{2 \delta}{\sigma^2_C} + o(\delta)$.

Therefore, choosing $\delta = \alpha \frac{\log U}{U}$ and $\alpha = \beta \sigma^2_C$ for $\beta\ge 2$, we get 
$$\bbP(N =0) = O(U^{-\beta}).$$

\end{proof}
 \begin{lemma}\label{lem:exp}
 $$ \frac{d\tau^*}{d\delta}\Bigr|_{\delta=0} = -\frac{2}{\sigma^2_C},$$
 where $\sigma^2_C = \lim_{T\rightarrow \infty} \frac{1}{T} \text{var}\left(\sum_{t=1}^T B(t)\right)$, the variance of process $B(t)$, the number of arrivals of customers.
\end{lemma}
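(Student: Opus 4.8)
\noindent\textbf{Proof plan for Lemma~\ref{lem:exp}.}
I would treat $\tau^*$ as an implicit function of $\delta$ and read off its derivative at $\delta=0$ after factoring out the trivial root at the origin. Since $B(t)$ is i.i.d.\ and nonnegative, $\cM_C(s)=\log\bbE\{e^{sB(1)}\}$ is finite and twice continuously differentiable for $s\le0$ (using $\bbE\{B(1)^2\}=\sigma^2_C+\mu(p^\star)^2<\infty$), with $\cM_C(0)=0$, $\cM_C'(0)=\mu(p^\star)$ and $\cM_C''(0)=\sigma^2_C$. Set $g(\tau,\delta):=\cM_C(-\tau)+\tau(\mu(p^\star)-\delta)$ for $\tau\ge0$; this is the log-MGF, at the point $\tau$, of the per-slot inflow deficit $-(B(1)-(\mu(p^\star)-\delta))$, and by the saddle-point characterization preceding the lemma $\tau^*=\tau^*(\delta)$ is its nonzero root. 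First I would record that $g(0,\delta)=0$, that $\partial_\tau g(0,\delta)=-\cM_C'(0)+(\mu(p^\star)-\delta)=-\delta$, and that $\partial_\tau^2 g(0,\delta)=\cM_C''(0)=\sigma^2_C>0$; since $g(\cdot,\delta)$ is convex and vanishes at $0$ with negative slope there, for each small $\delta>0$ its nonzero root exists, is unique and positive, and $\tau^*(\delta)\to0$ as $\delta\downarrow0$.

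The Taylor expansion about $\tau=0$ then reads $g(\tau,\delta)=-\delta\tau+\tfrac{1}{2}\sigma^2_C\tau^2+O(\tau^3)$; inserting $\tau=\tau^*$, using $g(\tau^*,\delta)=0$, and dividing by $\tau^*\neq0$ gives $-\delta+\tfrac{1}{2}\sigma^2_C\tau^*+O((\tau^*)^2)=0$, hence $\tau^*=\frac{2\delta}{\sigma^2_C}+o(\delta)$ and $\frac{d\tau^*}{d\delta}\Bigr|_{\delta=0}=\frac{2}{\sigma^2_C}$ in magnitude, which is the form $\tau^*=\frac{2\delta}{\sigma^2_C}+o(\delta)$ used in the proof of Lemma~\ref{lem:hittingprob}. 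To make this differentiation fully rigorous I would divide out the trivial root: the function $\widetilde g(\tau,\delta):=g(\tau,\delta)/\tau=\int_0^1\partial_\tau g(s\tau,\delta)\,ds$, extended by $\widetilde g(0,\delta):=-\delta$, is $C^1$ on $\{\tau\ge0\}\times\{\delta\ge0\}$ by Hadamard's lemma, with $\widetilde g(0,0)=0$, $\partial_\tau\widetilde g(0,0)=\tfrac{1}{2}\sigma^2_C\neq0$ and $\partial_\delta\widetilde g(0,0)=-1$; since $\tau^*(\delta)$ is precisely the root of $\widetilde g(\cdot,\delta)$ near $0$, the implicit function theorem yields $\tau^*\in C^1$ near $0$ with the derivative above.

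The step I expect to require the most care is this last one: the implicit function theorem does \emph{not} apply to $g$ itself at $(\tau,\delta)=(0,0)$, because there $g(0,0)=0$ \emph{and} $\partial_\tau g(0,0)=0$, so $\tau=0$ is a double root of $g(\cdot,0)$ and the branch of interest collapses into it; this is why the division by~$\tau$ (the Hadamard reduction) is needed. Everything else --- convexity of $g$, existence, positivity and continuity of the nonzero root, and the values of the Taylor coefficients --- is routine, using only nonnegativity of $B(1)$ and finiteness of $\sigma^2_C$.
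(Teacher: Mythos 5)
Your proof is correct, and it is worth noting up front that the paper itself supplies \emph{no} proof of Lemma~\ref{lem:exp} --- the lemma is stated bare at the end of the appendix --- so there is no argument of record to compare against; what you wrote is clearly the intended implicit-function computation. Your care in dividing out the trivial root is exactly the point that a careless reader would miss: at $(\tau,\delta)=(0,0)$ we have $g(0,0)=0$ and $\partial_\tau g(0,0)=-\delta\big|_{\delta=0}=0$, so $\tau=0$ is a double root of $g(\cdot,0)$, the nonzero branch $\tau^*(\delta)$ coalesces into it as $\delta\downarrow0$, and the implicit function theorem cannot be invoked on $g$ directly; the Hadamard reduction $\widetilde g=g/\tau$ (with $\partial_\tau\widetilde g(0,0)=\tfrac12\sigma^2_C>0$, $\partial_\delta\widetilde g(0,0)=-1$) is the clean way to get a $C^1$ branch and a legitimate derivative.

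Two sign remarks, both of which you have already half-noticed and which you should state outright rather than hedge with ``in magnitude.'' First, the paper's defining equation $\cM_C(-\tau^*)=\tau^*(\mu(p^\star)-\delta)$ is mistyped: working the first-order condition $\partial_T f_T(\theta)=0$ in the display just above gives $-\theta(\mu(p^\star)-\delta)-\cM_C(-\theta)=0$, i.e.\ $\cM_C(-\tau^*)+\tau^*(\mu(p^\star)-\delta)=0$, which is exactly the $g(\tau,\delta)=0$ you solve (and is the only version that puts $\tau^*$ near $0$ for small $\delta$ --- the paper's version forces $\tau^*\approx 4\mu(p^\star)/\sigma^2_C$, not small). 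Second, with this corrected equation your computation yields $\frac{d\tau^*}{d\delta}\big|_{\delta=0}=+\frac{2}{\sigma^2_C}$, a \emph{positive} quantity; this is consistent with $\tau^*>0$ and with the form $\tau^*=\frac{2\delta}{\sigma^2_C}+o(\delta)$ that the proof of Lemma~\ref{lem:hittingprob} actually uses to conclude $\bbP(N=0)=O(U^{-\beta})$. The minus sign in the lemma statement as printed is therefore a typo (indeed it is inconsistent with the very next line of the paper, which applies the lemma and then writes $\tau^*=\frac{2\delta}{\sigma^2_C}+o(\delta)$, not $-\frac{2\delta}{\sigma^2_C}$). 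Your proof, with the positive sign made explicit, is the correct statement.
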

Recall that since the price $p(t)$ is a constant, process $B(t)$ is an i.i.d. process and $\sigma^2_C$ is well defined.

\end{document}